\documentclass[11pt]{amsart}

\usepackage[
backend=biber,
style=alphabetic,
sorting=nyt
]{biblatex} 
\usepackage{enumerate}

\RequirePackage[utf8]{inputenc}
\usepackage{amsmath,amssymb}
\usepackage{amsfonts}
\usepackage{amsthm}
\usepackage{latexsym}
\usepackage{tikz}
\usetikzlibrary{shapes.geometric, arrows}
\usetikzlibrary{patterns}
\usepackage{graphicx}
\usepackage{epsfig}
\usepackage{enumerate}
\usepackage{xcolor}
\usepackage{hyperref}
\newtheorem{theorem}{Theorem}[section]
\newtheorem{lemma}[theorem]{Lemma}
\newtheorem{proposition}[theorem]{Proposition}
\newtheorem{definition}[theorem]{Definition}

\newtheorem{example}{Example}

\newtheorem{prop}{Proposition}[section]
\newtheorem{remark}[prop]{Remark}

\makeatletter \@addtoreset{equation}{section} \makeatother

\def\vol{\mathrm{vol}}
\def\av{\mathrm{Av}}
\def\dv{\mathrm{dV}}
\def\d{\mathrm{d}}
\def\div{\mathrm{div}}
\def\ric{\mathrm{Ric}}

\def\RR{{\mathrm R}}
\def \2R{{\hat{\RR}}}

\def\He{{\mathrm{Hess}}}

\def\xi{\partial_{x_i}}

\usepackage[margin=1.5 in]{geometry}

\begin{document}

\title{A lower bound for the first non-zero basic eigenvalue on a singular Riemannian foliation}
\author{Bach Tran}
\address{University of Oklahoma\newline
\indent Department of Mathematics\newline
\indent 601 Elm Ave\newline
\indent Norman, OK, 73019-3103, USA}
\email{bach.n.tran-1@ou.edu}
\date{\today}
\begin{abstract}
    In this paper, we provide  the lower bounds of the first non-zero basic eigenvalue on a closed singular Riemannian manifold $(M,\mathcal{F})$ with basic mean curvature that depends on the given non-negative lower bound of the Ricci curvature of $M$ and the diameter of the leaf space $M/\mathcal{F}$.
    
    These can be regarded as generalized versions of the Zhong-Yang estimate and a generalized Shi-Yang's estimate for singular Riemannian foliations with basic mean curvature. We also provide a rigidity result corresponding to the generalized Zhong-Yang estimate, which is a generalized Hang-Wang rigidity for singular Riemannian foliations with basic mean curvature. More precisely, when the first basic eigenvalue $\lambda_1^B$ is equal to $\frac{\pi^2}{\d_{M/\mathcal{F}}^2} $, where $\d_{M/\mathcal{F}}$ is the diameter of the leaf space, $M$ is 
    isometric to a mapping torus of an isometry $\varphi:N\to N$ where $N$ is an $(n-1)$-dimensional Riemannian manifold of nonnegative Ricci curvature and $\mathcal{F}$ has the form $\{[\{\text{point}\}\times N]\}$.
\end{abstract}
\maketitle
\tableofcontents
\markboth{Bach Tran}{A lower bound for the first non-zero basic eigenvalue on a singular Riemannian foliation}

\section{Introduction}\label{00000}

The study of how curvature influences the eigenvalues of the Laplacian has been a central theme in Riemannian geometry since the 1950s. One interesting problem in that trend is giving estimates for lower and upper bounds of the first eigenvalue of the Laplacian of a compact Riemannian manifold under curvature assumptions (in this paper, specifically, lower bounds on the Ricci curvature).

Recall that the Laplacian, or the Laplace-Beltrami operator on a Riemannian manifold $(M^n,g)$ is given by
$$\Delta=\div(\nabla).$$
A value $\lambda$ will be called an eigenvalue of the Laplacian of $M$ if there exists a non-zero function $f\in C^\infty(M)$ (that we call an  eigenfunction with the eigenvalue $\lambda$) such that 
$$\Delta f=-\lambda f.$$
When $M$ is compact, we know the fact that $\lambda$ is non-negative.

Let $\lambda_1(M)$ be the first non-zero eigenvalue of the Laplacian on $M$ and $\lambda_1$ if we are clear about which manifold we work on.

In the case that $\ric(M)\geq (n-1)K\geq 0$ for some constant $K$, many results have been found: Lichnerowicz \cite{lich} and Obata \cite{obata} proved that on a closed Riemannian manifold with $K>0$, one has $\lambda_1\geq nK$ and the equality holds if and only if $M$ is isometric to the $n$-sphere of radius $1/\sqrt{K}$. In the case $\ric(M)\geq 0$, we cannot get a positive lower bound without a restriction of the diameter $\d_M$ of $M$: for example, in the case of torus, if we increase the diameter arbitrarily and approaching infinity, then the first eigenvalue approaches $0$. Moreover, the diameter restriction is automatic in the case of $K>0$ by Bonnet-Myer's theorem. Hence, the diameter restriction is necessary. By Li and Yau \cite{liyau} and the improvement by Li \cite{li}, we get a gradient estimate for the first nontrivial eigenfunction and derive that
$$\lambda_1\geq\frac{\pi^2}{2\d_M^2}.$$

Zhong and Yang \cite{zhongyang} enhanced this result to be an optimal one for the case of $\ric(M)\geq 0$:
$$\lambda_1\geq \frac{\pi^2}{\d_M^2}.$$
Hang and Wang \cite{hangwang} later provided and proved the rigidity statemtent for Zhong-Yang estimate: the equality holds if and only if $M$ is isometric to a circle of radius $\d_M/\pi$. 

We can see there are many works about general optimal lower bound estimate for $\lambda_1$ for a given lower bound $(n-1)K$ of the Ricci curvature, for example, in \cite{progen,mufawang,benjulie,zhangwang}, and see also \cite{bakryqian,benni, yuehe,xashguoqi}. Inspired by foundational results of Lichnerowicz and Zhong-Yang, Li had conjectured that the first positive eigenvalue should satisfy the following estimate (see \cite{dagang})
\begin{align*}
    \lambda_1\geq\frac{\pi^2}{\d_{M}^2}+(n-1)K.
\end{align*}
The conjecture greatly motivates many related studies in this area and an effort to prove the so-called Li's conjecture will unify Zhong-Yang's estimate and Lichnerowicz's estimate. Many people tried to prove this conjecture, and in particular, some towards improved inequalities of the form
\begin{align}\label{aha}
    \lambda_1\geq\frac{\pi^2}{\d_{M}^2}+\alpha(n-1)K.
\end{align} for some constant $\alpha$ were made. A remarkable result by Shi and Zhang in \cite{shizhang} proved that for any $s\in (0,1)$
\begin{align*}
    \lambda_1\geq4s(1-s)\frac{\pi^2}{\d_M^2}+s(n-1)K.
\end{align*}
Notice that for $s=\frac{1}{2}$, we have a particular estimate
\begin{align*}
    \lambda_1\geq\frac{\pi^2}{\d_M^2}+\frac{1}{2}(n-1)K.
\end{align*} In \cite{benjulie}, B. Andrews and J. Clutterbuck showed that the inequality of this form \ref{aha} with $\alpha=\frac{1}{2}$ is the best possible constant and this means Li's conjecture is false. For more detail of the history, see \cite{yuehe}.

So how is the lower bound estimates of the first eigenvalue related to the object we will discuss in this paper, the first \textit{basic} eigenvalue on a singular Riemannian foliation? We first introduce the concept of singular Riemannian foliations. A singular Riemannian foliation on a Riemannian manifold $M$, roughly speaking, is a partition of $M$ into connected injectively immersed submanifolds (we call \textit{leaves}), which are locally equidistant to one another (see subsection \ref{121212}). One class of examples of singular Riemannian foliations are orbits of isometric actions of Riemannian manifolds, that we call \textit{homogeneous}. Another class of examples are connected components of the fibers of Riemannian submersions, that we call \textit{simple}. For more detail about homogeneous and simple singular Riemannian foliations, see \cite{radeschi}. 

Given $(M,\mathcal{F})$ a singular Riemannian foliation, we define $\mathcal{F}$-\textit{basic function}, is a function that is constant on each leaf of $\mathcal{F}$. When the know which $\mathcal{F}$ we talk about, we can call it briefly \textit{basic function}. This allows us to define $\mathcal{F}$-\textit{basic Laplacian} as an operator that sends basic smooth functions to basic smooth functions and define $\mathcal{F}$-\textit{basic eigenvalues}, that we will discuss more detail in Section \ref{121}. When $(M,\mathcal{F})$ admits a \textit{basic mean curvature}, it turns out that the ordinary Laplacian coincides with the basic Laplacian, and any results that work for the ordinary Laplacian can be used to derive corresponding results for the basic Laplacian. In the case of estimate a lower bound for the first non-zero basic eigenvalue $\lambda_1^B$, if we use Lichnerowiz's estimate, we can get $\lambda_1^B\geq \lambda_1\geq nK$ but it does not imply any better result because $nK$ does not depend on $\mathcal{F}$ that we consider, and we do not see the difference between Lichnerowicz's estimate for the first non-zero eigenvalue and Lichnerowic'z estimate for the first non-zero basic eigenvalue. However, for the estimates involving the diameter like in \cite{zhongyang,progen,mufawang,benjulie,zhangwang,shizhang}, we can generalize them by replacing the diameter $\d_M$ of the ambient manifold $M$ with the diameter of the leaf space $\d_{M/\mathcal{F}}$ and it is reasonable to expect similar estimates with leaf space diameters. Hence, in this paper, we assume that the Ricci curvature of the ambient manifold is bounded below by a non-negative value. 

In the case of non-negative Ricci curvature, we obtain a generalized version of Zhong-Yang estimate for singular Riemannian foliations
        \begin{theorem}\label{main1}

    Let $M^n$ be a compact Riemannian manifold with non-negative Ricci curvature and $\mathcal{F}$ a singular Riemannian foliation of $M$ with closed leaves and basic mean curvature. Then we have a lower bound for the first non-zero basic eigenvalue $\lambda_1^B$ of the Laplacian, given by \begin{align}
        \lambda_1^B\geq \frac{\pi^2}{d_{M/\mathcal{F}}^2}.
    \end{align}
\end{theorem}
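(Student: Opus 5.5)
We would follow the Zhong--Yang maximum-principle scheme, run on a first basic eigenfunction, and measure all distances in the leaf space rather than in $M$. Let $f$ be an eigenfunction for $\lambda_1^B$. Since $\mathcal{F}$ has basic mean curvature, the basic Laplacian agrees with the ordinary Laplacian on basic functions, so $\Delta f=-\lambda_1^B f$ holds on all of $M$ in the ordinary sense. We normalize so that $\max f=1$ and $\min f=-k$ with $0<k\le 1$; both extremes have the correct signs because $\int_M f=0$. Then $f$ is a smooth function on $M$, and the Bochner formula with $\ric\ge 0$ gives
\begin{align*}
\tfrac12\Delta|\nabla f|^2\ge |\He f|^2+\langle\nabla f,\nabla\Delta f\rangle .
\end{align*}
The pointwise Zhong--Yang computation is purely local and uses only this inequality, the eigenvalue equation, and the maximum principle on the compact manifold $M$. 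We therefore expect it to carry over verbatim. It yields the gradient comparison
\begin{align*}
|\nabla f|^2\le \lambda_1^B\,(1-\theta^2)\,\text{-type bound},
\end{align*}
where the precise form depends on the chosen normalization. In the symmetric model it reads $|\nabla \arcsin f|^2\le \lambda_1^B$. In general we use $\theta=\arcsin\bigl((f-\tfrac{1-k}{2})/\tfrac{1+k}{2}\bigr)$, and the Zhong--Yang barrier function $\psi(\theta)$ gives $|\nabla\theta|^2\le \lambda_1^B(1+\psi(\theta))$, where $\int_{-\pi/2}^{\pi/2}(1+\psi)^{-1/2}\,d\theta\ge\pi$. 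We intend to import this step from \cite{zhongyang}; the proof is local and nothing is specific to the foliation.

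The foliation enters only in the integration step. Pick $p\in M$ with $f(p)=1$ and $q\in M$ with $f(q)=-k$. In Zhong--Yang one integrates $|\nabla\theta|/\sqrt{\lambda_1^B(1+\psi)}\le 1$ along a minimal geodesic from $q$ to $p$ and obtains $\pi/\sqrt{\lambda_1^B}\le \d_M$. We instead choose $p,q$ on the leaves $L_p,L_q$ so that $d(p,q)=d(L_p,L_q)$, which is possible because the leaves are closed and $M$ is compact. Since $f$ is constant on leaves, this choice does not change the values $f(p)=1$ and $f(q)=-k$. A minimal geodesic $\gamma$ from $q$ to $p$ realizing the distance between the leaves then has length at most $\d_{M/\mathcal{F}}$, because the leaf-space metric is exactly the distance between leaves. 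Integrating along $\gamma$ gives
\begin{align*}
\pi\le\int_{-\pi/2}^{\pi/2}\frac{d\theta}{\sqrt{1+\psi(\theta)}}\le \sqrt{\lambda_1^B}\int_\gamma \frac{|\nabla\theta|}{\sqrt{\lambda_1^B(1+\psi)}}\le\sqrt{\lambda_1^B}\,\mathrm{length}(\gamma)\le \sqrt{\lambda_1^B}\,\d_{M/\mathcal{F}},
\end{align*}
which is the claimed bound.

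The main obstacle is justifying that the gradient estimate from \cite{zhongyang} holds under our hypotheses. It requires $f$ to be a genuine smooth eigenfunction of the ordinary Laplacian on $M$. This holds only because of the basic mean curvature assumption, which is presumably available from Section \ref{121}. We must also check that the first basic eigenvalue equation does not force any extra boundary or singular-stratum contributions. Since $M$ is closed and $f$ is smooth on all of $M$, the maximum principle argument runs on $M$ itself, and we never need to work on the singular leaf space. A secondary point is to verify carefully that the Zhong--Yang barrier construction handles the asymmetric case $k<1$; there we would simply quote their lemma.
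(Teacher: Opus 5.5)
Your proposal is correct and is essentially the paper's proof: import the Zhong--Yang estimate $|\nabla\theta_\varepsilon|^2\le\lambda_1^B(1+a_\varepsilon\psi(\theta_\varepsilon))$, then integrate along a geodesic whose length is a distance between two leaves. Your $1+\psi$ should be read as $1+a\psi$ with $a=\frac{1-k}{1+k}$, and the paper works with the $\varepsilon$-regularized $\theta_\varepsilon$ before letting $\varepsilon\to 0$. The only other difference is cosmetic: the paper fixes a minimum point $x$, takes a nearest point $y$ of the maximum level set, and uses global equidistance to get $d(x,L_y)=d(L_x,L_y)$, whereas you take a pair of points realizing $d(L_p,L_q)$ by compactness of the leaves, which works equally well.
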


We also have the rigidity of Theorem \ref{main1}, that generalize Hang-Wang result as the following
\begin{theorem}\label{main2}
    Let $M^n$ be a connected compact Riemannian manifold with non-negative Ricci curvature and $\mathcal{F}$ a singular Riemannian foliation of $M$ with closed leaves and basic mean curvature. Assume that $\lambda_1^B=\frac{\pi^2}{\d_{M/\mathcal{F}^2}}$. Then $M$ is isometric to $\mathbb{R}\times_\mathbb{Z}N$, where $N$ is a compact $(n-1)$-dim Riemannian manifold with $\ric\geq 0$ and $\mathbb{R}\times_\mathbb{Z}N=(\mathbb{R}\times N)/\mathbb{Z}$ with the isometric action of $\mathbb{Z}$ is defined by $k\cdot (t,x)=\left(t+\frac{2k\pi}{\sqrt{\lambda_1^B}},\varphi^k(x)\right)$ where $k\in \mathbb{Z},(t,x)\in \mathbb{R}\times N$ and $\varphi:N\to N$ is an isometry of $N$. Furthermore, $\mathcal{F}$ is a (regular) Riemannian foliation of codimension $1$ of the form $\{[\{\text{point}\}\times N]\}$.

    Consequently, $M/\mathcal{F}$ is isometric to the circle of radius $\frac{1}{\sqrt{\lambda_1^B}}$.
\end{theorem}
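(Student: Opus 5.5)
The plan is to follow Hang--Wang's rigidity argument. The difference is that the relevant extremal quantity is the leaf-space diameter, and the eigenfunction is basic.

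Let $f$ be a basic eigenfunction with $\Delta f=-\lambda_1^B f$. Basic mean curvature guarantees that the ordinary Laplacian preserves basic functions, so $f$ is a genuine eigenfunction of $\Delta$ on $M$. Normalize so that $\max f=1$ and $\min f=-k\ge -1$.

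\textbf{Step 1: equality in the gradient estimate.} Go back to the proof of Theorem \ref{main1}. There one compares $|\nabla f|^2$ with the one-dimensional model, using the Bochner formula and $\ric\ge0$:
\begin{align*}
|\nabla f|^2\le \lambda_1^B(1-f^2)
\end{align*}
(after the Zhong--Yang refinement). One then integrates $|\nabla f|/\sqrt{1-f^2}$ along a minimal geodesic that is horizontal, i.e.\ perpendicular to the leaves. Such a geodesic joins a leaf where $f=1$ to a leaf where $f=-k$, and its length is at most $d_{M/\mathcal{F}}$. This yields $\pi/\sqrt{\lambda_1^B}\le d_{M/\mathcal F}$.

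If equality holds, two things follow. First, $k=1$: otherwise the Zhong--Yang correction term gives a strict inequality. Second, the gradient estimate $|\nabla f|^2=\lambda_1^B(1-f^2)$ must be an equality along that geodesic. Writing $f=\sin(\sqrt{\lambda_1^B}\,\theta)$, one then argues as in Hang--Wang via the maximum principle that equality propagates to all of $M$. The quantity $\lambda_1^B(1-f^2)-|\nabla f|^2\ge0$ satisfies an elliptic differential inequality, and it vanishes at an interior point, so it vanishes identically.

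\textbf{Step 2: equality in Bochner.} With $|\nabla f|^2=\lambda_1^B(1-f^2)$ everywhere, substitute into the Bochner formula
\begin{align*}
\tfrac12\Delta|\nabla f|^2=|\He f|^2+\langle\nabla f,\nabla\Delta f\rangle+\ric(\nabla f,\nabla f).
\end{align*}
Together with $|\He f|^2\ge(\Delta f)^2/n$, this forces three conclusions. First, $\ric(\nabla f,\nabla f)=0$. Second, $\He f$ has rank one. Third, $\He f=-\lambda_1^B f\,\frac{\nabla f\otimes\nabla f}{|\nabla f|^2}$ on the set where $\nabla f\neq0$.

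Consequently, $\nabla f$ has no zeros other than the points where $f=\pm1$. Moreover, $X=\nabla f/|\nabla f|$ is a unit field whose integral curves are geodesics. Also $\nabla\theta$, where $\theta=\arcsin(f)/\sqrt{\lambda_1^B}$ is taken locally, is a parallel unit field: its Hessian vanishes by the rank-one formula.

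One must then rule out the critical sets. Near the maximum, $f=1-\frac{\lambda_1^B}{2}r^2+\dots$ would force $\Delta f$ to behave like the one-dimensional model, which contradicts $\He f$ having rank one with $\Delta f=-\lambda_1^B$ there. As in Hang--Wang, the parallel field $\nabla\theta$ then extends globally, with $\theta$ taking values in $\mathbb{R}/(2\pi/\sqrt{\lambda_1^B})\mathbb{Z}$. This step is the main obstacle. What needs care is showing that $\theta$ is well defined as a circle-valued function without singularities. I would use that the level sets of $f$ are unions of leaves, and that $d\theta$ is a closed, nowhere-vanishing parallel $1$-form.

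\textbf{Step 3: structure.} A parallel unit vector field on a compact manifold splits the universal cover isometrically, by de Rham. More concretely, the flow of $\nabla\theta$ acts by isometries. Take $N=\theta^{-1}(0)$, a compact totally geodesic hypersurface whose Ricci curvature is inherited, so $\ric_N\ge0$. The map $\mathbb{R}\times N\to M$, $(t,x)\mapsto \Phi_t(x)$, is a local isometry, and its deck group is generated by the first return map $\varphi=\Phi_{2\pi/\sqrt{\lambda_1^B}}|_N$. This gives $M\cong\mathbb{R}\times_{\mathbb Z}N$.

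Finally, since $f$ is basic and $\nabla\theta$ is horizontal, every leaf lies in a level set $\{\theta=c\}\cong N$. The leaf space has diameter $d_{M/\mathcal F}=\pi/\sqrt{\lambda_1^B}$, which equals the diameter of the circle $\mathbb{R}/(2\pi/\sqrt{\lambda_1^B})$. If some level set contained two distinct leaves, then the quotient would surject onto the circle but would not be isometric to it. I would then use the equidistance of leaves, or the fact that the level sets are connected and the foliation is singular Riemannian, to conclude that each level set is exactly one leaf. Hence $\mathcal F=\{[\{t\}\times N]\}$ and $M/\mathcal F$ is the circle of radius $1/\sqrt{\lambda_1^B}$.
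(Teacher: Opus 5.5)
Your Steps 1--3 are an outline re-derivation of the Xu--Xue structure theorem, which the paper simply imports as Theorem~\ref{uoa}. The genuine gap is in your last step, which is exactly where the foliation has to be used.

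\textbf{The last step.} Your remark that $M/\mathcal F$ ``would surject onto the circle but would not be isometric to it'' is not a contradiction: a distance-nonincreasing surjection between spaces of equal diameter need not be injective. You then appeal to connectedness of the level sets, but this has not been shown, and the analysis alone cannot show it. It only gives that the flow of $\nabla\theta$ returns to a component of $\theta^{-1}(0)$ after time $2m\pi/\sqrt{\lambda_1^B}$ for some $m\ge1$ (Xu--Xue's $\mathcal N(u)=2m$). So with $\theta$ read modulo $2\pi/\sqrt{\lambda_1^B}$, your $N=\theta^{-1}(0)$ may have $m$ components, your $\varphi$ is not the first return map, and each level set contains at least $m$ leaves.

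The paper closes this with two arguments your proposal lacks.
\begin{enumerate}[(i)]
\item Write $M=\mathbb{R}\times_{\mathbb Z}N_0$ with $N_0$ connected and translation $2m\pi/\sqrt{\lambda_1^B}$. Leaves are connected and $u$ is basic, so each leaf lies in one slice $\{t\}\times N_0$. Hence $\mathcal F$ is finer than the slice foliation $\mathcal F'$, and $m\pi/\sqrt{\lambda_1^B}=d_{M/\mathcal F'}\le d_{M/\mathcal F}=\pi/\sqrt{\lambda_1^B}$, i.e.\ $m=1$.
\item Suppose $L_1,L_2$ lie in the slice at $t$, and take $s=[(t+d_{M/\mathcal F},s_2)]$ in the antipodal slice. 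By equidistance, $d(s,L_i)=d(L_s,L_i)\le d_{M/\mathcal F}$. On the other hand, every point of the slice at $t$ is at distance at least $d_{M/\mathcal F}$ from $s$. By Pythagoras in the local product $\mathbb R\times N$, equality holds only at the endpoints of the (at most two) horizontal segments of length $d_{M/\mathcal F}$ issuing from $s$. So each $L_i$ contains one of these endpoints, and letting $s_2$ range over the connected $N$ forces $L_1=L_2$.
\end{enumerate}
This diameter comparison together with the Pythagorean rigidity is the missing idea.

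\textbf{Propagating equality.} The strong maximum principle for your quantity $\lambda_1^B(1-f^2)-|\nabla f|^2$ (equivalently for $\lambda_1^B-|\nabla\arcsin f|^2$, whose differential inequality has locally bounded coefficients) is only available on $\{|f|<1\}$. It therefore gives vanishing only on the component of $\{|f|<1\}$ containing $\gamma$. For the model eigenfunction $-\cos(\sqrt{\lambda_1^B}\,t)$ on the mapping torus, that component is just one of the two halves between the minimum and maximum slices.

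The contradiction you propose at the maximum does not exist. On $f^{-1}(1)$ one has $\mathrm{Hess}\,f=-\lambda_1^B\,\nu\otimes\nu$, which has rank one and trace $-\lambda_1^B=\Delta f$, exactly as in the model. Crossing the critical hypersurfaces is precisely Step~3 of Xu--Xue's Theorem~4.1, which the paper invokes; you need that or an equivalent argument.

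Also, $|\mathrm{Hess}\,f|^2\ge(\Delta f)^2/n$ is too weak to force your rank-one conclusion. Use instead $\mathrm{Hess}\,f(\nabla f,\cdot)=\tfrac12\nabla|\nabla f|^2=-\lambda_1^B f\,\nabla f$, which gives $|\mathrm{Hess}\,f|^2\ge(\lambda_1^B f)^2$.
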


Notice that in this theorem,  $\mathbb{R}\times_\mathbb{Z}N=\left[0,\frac{2\pi}{\sqrt{\lambda_1^B}}\right]\times N/\sim$, where  $(0,x)\sim\left(\frac{2\pi}{\sqrt{\lambda_1^B}},\varphi(x)\right),\forall x\in N$, is exactly a space so-called a \textit{mapping torus} of $\varphi:N\to N$.

Finally, we have the generalized version of Shi-Zhang estimate for singular Riemannian foliations when we have a non-negative lower bound of the Ricci curvature
\begin{theorem}
   \label{main}
    Let $M^n$ be a compact Riemannian manifold and $\mathcal{F}$ a singular Riemannian foliation of $M$ with closed leaves and basic mean curvature. Assume the Ricci curvature of $M$ is bounded below by $(n-1)K\geq 0$. Suppose $\lambda_1^B$ is the first non-zero basic eigenvalue of $(M,\mathcal{F})$. Then
    \begin{align}\label{uuuuuu}
        \lambda_1^B\geq 4s(1-s)\frac{\pi^2}{d_{M/\mathcal{F}}^2}+s(n-1)K,
    \end{align}for any constant $s$ such that $0<s<1$.

\end{theorem}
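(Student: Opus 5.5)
We follow Shi and Zhang's argument for $\lambda_1$ and check that every step survives the passage to basic functions and the leaf space diameter. Let $u$ be a basic eigenfunction with $\Delta u=-\lambda_1^B u$. Since the mean curvature is basic, the basic Laplacian coincides with the ordinary Laplacian on basic functions, so $u$ is a genuine smooth eigenfunction of $\Delta$ on $M$ that is constant on leaves. After rescaling we may assume $\max u=1$ and $\min u=-k$ with $0<k\le 1$. Following Zhong--Yang, we set $u=\frac{1-k}{2}+\frac{1+k}{2}\,v$, so that $v$ ranges over $[-1,1]$ and
\begin{align*}
\Delta v=-\lambda_1^B (v+a),\qquad a=\tfrac{1-k}{1+k}\in[0,1).
\end{align*}

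The analytic core is the gradient estimate of Shi--Zhang. Write $v=\sin\theta$ and consider the auxiliary function $F=|\nabla v|^2-\lambda_1^B(1-v^2)-2a\lambda_1^B\,\psi(\theta)$ for a suitable barrier $\psi$, or more directly aim for an estimate of the form
\begin{align*}
|\nabla v|^2\le \frac{\lambda_1^B - s(n-1)K}{4s(1-s)}\,\bigl(1-v^2\bigr)\quad\text{(up to the $a$-correction)}.
\end{align*}
We apply the maximum principle at an interior maximum point of the auxiliary quantity, using the Bochner formula with $\ric\ge (n-1)K$. The refined Kato-type inequality $|\He v|^2\ge$ (the contribution from the direction $\nabla v$) is used with the parameter $s$, exactly as in \cite{shizhang}. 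This step is pointwise on $M$ and uses only the Ricci lower bound and the eigen-equation, so it carries over verbatim. I will cite it rather than redo it, noting that $v$ is smooth on all of $M$; singular leaves play no role because the maximum principle is applied on the manifold $M$, not on $M/\mathcal{F}$.

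The new ingredient is the integration step. Pick points $p,q\in M$ with $v(p)=1$ and $v(q)=-1$ (or the values $\max$ and $\min$). Let $L_p,L_q$ be their leaves, and let $\gamma$ be a minimizing geodesic from $L_p$ to $L_q$, which has length $d(L_p,L_q)\le \d_{M/\mathcal{F}}$. Since $v$ is basic, $v$ takes the values $1$ and $-1$ on the entire leaves, so we may integrate the gradient estimate along $\gamma$:
\begin{align*}
\int_{-1}^{1}\frac{dv}{\sqrt{1-v^2}\,(\cdots)}\le \sqrt{\frac{\lambda_1^B-s(n-1)K}{4s(1-s)}}\;\d_{M/\mathcal{F}}.
\end{align*}
The left side evaluates to at least $\pi$ after the handling of the $a$-term, which as in Zhong--Yang/Shi--Zhang only helps, i.e. the symmetric case $a=0$ is the worst. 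Rearranging yields \eqref{uuuuuu}.

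The expected main obstacle is the first step. I must verify that the Shi--Zhang maximum principle argument for the case $a\neq 0$ (the barrier function treatment) indeed needs no symmetry of $M$. I must also confirm that basicness of $v$, as a smooth function on $M$, is all that is used. The replacement of $\d_M$ by $\d_{M/\mathcal{F}}$ is then only the observation that a shortest horizontal geodesic between the extremal leaves has length at most $\d_{M/\mathcal{F}}$.
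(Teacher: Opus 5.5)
Your geometric step is correct and is exactly the paper's. A pointwise gradient estimate on $M$ does not see the foliation. Integrating it along a minimizing geodesic from a point of $u^{-1}(\min u)$ to the saturated set $u^{-1}(\max u)$ uses length $d(L_x,L_y)\le d_{M/\mathcal{F}}$.

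The gap is the analytic core, which you cite but which does not exist in the form you need. In Shi--Zhang, as this paper uses them, the parameter $s$ enters only through a one-dimensional ODE lemma (Lemma \ref{abcde}). That lemma is applied to the Neumann eigenfunction of the model operator $L_{K,n}$, after $\lambda_1$ has already been compared with the model eigenvalue. There is no pointwise maximum-principle bound $|\nabla v|^2\le\frac{\lambda_1^B-s(n-1)K}{4s(1-s)}(1-v^2)$ with an $s$-dependent Kato inequality to cite.

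Nor does such a bound come out of the standard computation. Take the symmetric case $a=0$ and $Q=|\nabla v|^2+\mu v^2$. At an interior maximum with $\nabla v\neq0$, the Bochner formula and $\ric\ge(n-1)K$ give $0\ge\bigl((n-1)K+\mu-\lambda_1^B\bigr)|\nabla v|^2+\mu(\mu-\lambda_1^B)v^2$. This yields $Q\le\mu$ only for $\mu\ge\lambda_1^B$, so this route returns $\lambda_1^B\ge\pi^2/d_{M/\mathcal{F}}^2$ and nothing from $K$.

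Your treatment of $a\neq0$ (``the symmetric case is the worst'') is likewise only asserted. For $K>0$ this is precisely the nontrivial point, and nothing in Shi--Zhang supplies a Zhong--Yang-type barrier adapted to the $s$-family.

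The pointwise estimate that really does carry over verbatim is the Bakry--Qian gradient comparison, which the paper uses in Lemma \ref{999}. Normalize $\min u=-1$ and $\max u=\rho\in(0,1]$. Take an interval $[a,b]$ on which the Neumann problem for $L_{K,n}v=v''-(n-1)\sqrt K\tan(\sqrt K x)v'$ has eigenvalue $\lambda_1^B$, with eigenfunction satisfying $v(a)=-1$ and $v(b)=\rho$. Then $g=v^{-1}\circ u$ satisfies $|\nabla g|\le1$ on $M$, and integrating along your horizontal geodesic gives $b-a\le d_{M/\mathcal{F}}$.

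The asymmetry is then removed by purely one-dimensional facts. Among intervals of fixed length, the centered one minimizes the Neumann eigenvalue, and $\delta\mapsto\lambda(K,n,\delta,-\delta/2)$ is non-increasing. Together these give $\lambda_1^B\ge\lambda(K,n,d_{M/\mathcal{F}},-d_{M/\mathcal{F}}/2)$.

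Only at this point is Shi--Zhang's lemma applied. It is used on the odd model eigenfunction on $[0,d_{M/\mathcal{F}}/2]$ with $F(x)=-(n-1)\sqrt K\tan(\sqrt K x)$ and $F_0=-(n-1)K$, which gives $4s(1-s)\pi^2/d_{M/\mathcal{F}}^2+s(n-1)K$.

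You also need the boundary cases separately. When $d_{M/\mathcal{F}}=\pi/\sqrt K$ the model is not defined; use $\lambda_1^B\ge\lambda_1\ge nK$ (Lichnerowicz). When $K=0$, use Theorem \ref{main1} together with $4s(1-s)\le1$.
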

\begin{remark}Notice that when $\mathcal{F}$ is a foliation by points, $M/\mathcal{F}$ is just the same as the ambient manifold $M$, and we recover Zhong-Yang estimate, Hang-Wang rigidity, and Shi-Zhang estimate. In the more general case, when $\mathcal{F}$ is given as the (connected) fibers of a Riemannian submersion $\pi:M\to M/\mathcal{F}$ with totally geodesic (or, more generally, minimal) fibers, the basic spectrum of $M$ is turned out to be the spectrum of the base $M/\mathcal{F}$ of the submersion and $\lambda_1^B$ is the first non-zero eigenvalue $\alpha_1$ of the base $M/\mathcal{F}$, we get Theorem \ref{main1} by applying the Zhong-Yang estimate directly for $\alpha_1=\lambda_1^B$  of the base $M/\mathcal{F}$. However, we cannot apply Hang-Wang rigidity or Shi-Zhang estimate for the base to get Theorem \ref{main2} or Theorem \ref{main} respectively. These points will be discussed more detail in sections \ref{ililil} and \ref{1111}.
\end{remark}
\begin{remark}
    Particularly, in the case of small diameter of $M$, we can choose a specific value of $s$ to get the optimal estimate:
    \begin{align*}
        \lambda_1^B\geq\frac{\pi^2}{\d_{M/\mathcal{F}}^2}+\frac{1}{2}(n-1)K+\frac{(n-1)^2K^2\d_{M/\mathcal{F}}^2}{16\pi^2}.
    \end{align*}

    We will discuss more after proving Theorem \ref{main} in Section \ref{1111}.
\end{remark}

The paper is structured as follows: in Section \ref{121}, we recall the main definitions and important results that we will use throughout the paper. In Section \ref{122}, we discuss some examples to illustrate Theorem \ref{main}. In Section \ref{7777}, we prove Theorem \ref{main1} and Theorem \ref{main2}. Finally, in Section \ref{1111}, we prove Theorem \ref{main}.

\textbf{Acknowledgements.} I would like to thank Dr. Ricardo Mendes, my Ph.D advisor, for many great ideas and suggestions of readings. His thoughtful feedback on my drafts, especially the discussion after the proof of Theorem \ref{main}, was greatly helpful in improving this paper. Also, I would like to thank Dr. Ken Richardson and Dr. Catherine Searle for comments on the last version of the rigidity statement in my last talk, it helped me figuring out how to state and prove the rigidity statement of Theorem \ref{main2} correctly. And thank Dr. Michael Jablonski and Dr. Marco Radeschi for comments and suggestion of writing on the latest version.

\section{Preliminaries}\label{121}
\subsection{Singular Riemannian Foliations}\label{121212}
\subsubsection{Basic definitions and facts}
Singular Riemannian foliations, first introduced in \cite{molino} by Molino in his study of Riemannian foliations and constitute nowadays an active field of research. 
\begin{definition}\label{667}
    Let $\mathcal{F}$ be a decomposition of a Riemannian manifold $(M,g)$ into connected injectively immersed submanifolds, called \textit{leaves}, which may have different dimensions. We say that $\mathcal{F}$ is a \textit{singular Riemannian foliation} if the following conditions are satisfied:
    \begin{enumerate}[(i)]
        \item $\mathcal{F}$ is a \textit{transnormal system}, that is, every geodesic orthogonal to one leaf remains orthogonal to all the leaves that it intersects,
        \item $\mathcal{F}$ is a \textit{singular foliation}, that is, $T_pL=\{X_p:X\in\mathcal{X}_{\mathcal{F}}\}$ for every leaf $L$ in $\mathcal{F}$ and every $p\in L$, where $\mathcal{X}_\mathcal{F}$ is the module of smooth vector fields on the ambient manifold $M$ that are everywhere tangent to the leaves of $\mathcal{F}$.
    \end{enumerate}

\end{definition}

The first condition in Definition \ref{667} is equivalent to the leaves being locally equidistant, see \cite{radeschi}, Proposition 2.3. If every leaf of $\mathcal{F}$ is closed, we say $\mathcal{F}$ is \textit{closed}. In this case, the condition of local equidistance of leaves becomes the global equidistance of leaves.

For a given singular Riemannian foliation $(M,\mathcal{F})$, leaves form a space that we call the \textit{leaf space}, denoted by $M/\mathcal{F}$. More precisely, we define an equivalence relation on $M$ by saying that $x\sim y$ if $x$ and $y$ are in the same leaf $L_x\in\mathcal{F}$ containing $x$, and then the corresponding element of $M/\mathcal{F}$ is $[L_x]$. In addition, if the leaves are closed, then because of the global equidistance of leaves, the leaf space $M/\mathcal{F}$ can be equipped with a well-defined metric structure, that is, for every $L_p,L_q\in \mathcal{F}$, we define the metric $\rho$ by \begin{align*}
    \rho([L_p],[L_q]):=\d(L_p, L_q)=\d(p,L_q)=\d(L_p,q)=\inf_{z\in L_q}\d(p,z)=\inf_{t\in L_p}\d(t,q).
\end{align*} Then the natural projection map $\sigma:M\to M/\mathcal{F}$ where $M/\mathcal{F}$ is equipped the metric $\rho$ is a map so-called \textit{manifold submetry}. For more detail, see \cite{mendesradeschi}. We can define the diameter of the leaf space $M/\mathcal{F}$ by
\begin{align*}
    \d_{M/\mathcal{F}}=\sup_{L,L'\in \mathcal{F}}\d(L,L').
\end{align*}

We define a function on $M$ to be \textit{$\mathcal{F}$-basic} (or just \textit{basic}, if $\mathcal{F}$ is understood from context) when it is constant on the leaves of $\mathcal{F}$. We denote the set of smooth basic functions on $M$ by $C_B^\infty(M)$.

 The leaves of maximal dimension are called \textit{regular} and the other ones are \textit{singular}. The points of $M$ are said to be regular or singular according to the leaves through them. A singular Riemannian foliation is called \textit{regular} if all leaves are regular, that is, if it is a Riemannian foliation. The \textit{dimension} of $\mathcal{F}$ is the maximal dimension of the leaves and its \textit{codimension} is $\dim M-\dim\mathcal{F}$.
\subsubsection{Basic mean curvature}

Let $(M,\mathcal{F})$ be a singular Riemannian foliation. Then there exists an open, dense, saturated, full-measure subset $N\subset M$, which is called the \textit{principal part}, such that $N/\mathcal{F}\subset M/\mathcal{F}$ is a smooth Riemannian manifold, and the restriction of $\sigma$ on $N$, $\sigma|_N:N\to N/\mathcal{F}$ is a Riemannian submersion (see \cite{radeslyt}, page 3).

For every $p\in N$, denote $H(p)\in T_pM$ for the mean curvature vector of the leaf $L_p\subset M$. These mean curvature vectors form a smooth vector field $H$ on $N$. We say that $\mathcal{F}$ admits a \textit{basic mean curvature} when $H$ is $\sigma$-related to a smooth vector field $\overline{H}$ on $N/\mathcal{F}$.
Admitting a basic mean curvature is automatic when considering a singular Riemannian foliation on a round sphere (see \cite{radeslyt}, Proposition 3.1), or on a homogeneous foliation.

\subsection{Basic Laplacian and basic eigenvalues}

\subsubsection{Averaging operator}\label{7}

Let $(M,\mathcal{F})$ be a singular Riemannian foliation with compact leaves. For a given $L^2$- function $f:M\to\mathbb{R}$, we can define its average $\av(f):M\to\mathbb{R}$ over $\mathcal{F}$ by
\begin{align*}
    \av(f)(p)=\frac{1}{\vol(L_p)}\int_{ L_p}f(q)\dv^p(q),
\end{align*}where $\dv^p$ denotes the Riemannian volume form of the compact Riemannian manifold $L_p$ and $\vol(L_p)$ denotes its volume.

When $\mathcal{F}$ admits a basic mean curvature, the operator $\av$ takes smooth functions to smooth functions, and it commutes with the Laplacian on the principal part $N$ (see \cite{radeslyt}, Theorem 3.3 and Lemma 3.2). In particular, see \cite{parken} for more detail for the case of regular Riemannian foliations.

\subsubsection{Basic Laplacian}
In \cite{parken}, we can see the definition of the basic Laplacian for Riemannian foliations. 

With the assumption of having basic mean curvature, the ordinary Laplacian coincides with the basic Laplacian: Given a singular Riemannian foliation $(M,\mathcal{F})$, where $(M,g)$ is a compact Riemannian manifold. Let $\Delta:=\div\nabla:C^\infty(M)\to C^\infty(M)$ be the (ordinary) Laplace operator on smooth functions over $M$. Recall from Section \ref{7} that $\av$ commutes with $\Delta$ on $C^\infty(M)$. Therefore, the Laplacian $\Delta$ sends basic smooth functions to basic smooth functions and it turns out that $\Delta$ is exactly the basic Laplacian.

In this case, we call $\lambda$ a \textit{basic eigenvalue} if there is a non-zero basic smooth function $f\in C_B^\infty(M)$ such that $\Delta f=-\lambda f$ and $f$ is the corresponding \textit{basic eigenfunction} of $\lambda$.
\section{Examples}\label{122}

In this section, we provide some examples to verify the result of Theorem \ref{main1} and Theorem \ref{main} that will be proved in  Sections \ref{7777} and \ref{1111}. In particular, we consider singular Riemannian foliations on spheres, where we know that they have basic mean curvature, see \cite{radeslyt}.

\begin{example}
    Let $S^1=\{e^{i\theta}\vert \theta\in [0,2\pi)\}$ act isometrically on the round sphere  $S^{2n+1}\subseteq \mathbb{C}^{n+1}$ by the action $e^{i\theta}\cdot (z_1,\cdots,z_{n+1})=(e^{i\theta}z_1,\cdots,e^{i\theta}z_{n+1})$. This action defines a homogeneous foliation $\mathcal{F}$ on $S^{2n+1}$ by its $S^1$-orbits and the leaf space is $S^{2n+1}/\mathcal{F}=\mathbb{CP}^n$, an $2n$-dimensional complex projective space with the Fubini-Study metric.

    We see that the first basic eigenvalue is $\lambda_1^B=2(2n+2)$ with the corresponding basic eigenfunction is a harmonic homogenous polynomial of degree $2$ and $\d_{\mathbb{CP}^n}=\pi/2$. We verify Theorem \ref{main}
    \begin{align*}
        \lambda_1^B\geq 4s(1-s)\frac{\pi^2}{\d_{\mathbb{CP}^n}^2}+s(2n+1-1),
    \end{align*}for all $0<s<1$ with the choice of $K=1$.

    The inequality is equivalent to
    \begin{align*}
        2(2n+2)\geq16s(1-s)+s(2n+1-1).
    \end{align*}Rearranging terms to get the equivalent inequality
    \begin{align*}
        2(2-s)n+4[1-4s(1-s)]\geq 0.
    \end{align*}

    This is true for all $s\in (0,1)$ and we see Theorem \ref{main} holds in this case. 
\end{example}

\begin{example}
    Isoparametric foliations on spheres are well-known examples of singular Riemannian foliations, where the reader can find a long history with contributions by many mathematicians, see \cite{miguel} and \cite{chi} for more detail. 

    Such a singular Riemannian foliation $(S^n,\mathcal{
    F})$
    can be determined by an \textit{isoparametric submanifold} $N\subset S^n$ and taking parallel submanifolds of that $N$ to obtain an isoparametric foliation on that sphere $S^n$. Let $g$ be the number of principal curvatures of $N\subset S^n$. Then, in the case when $N$ is of codimension one, M\"unzner proved that $g\in \{1,2,3,4,6\}$ (see Satz A, \cite{munzner1}).  

Also in the proof of Theorem 2 of M\"unzner's paper (see Satz 2, \cite{munzner1}), the (homogeneous) polynomial $G=g(g+n-1)F-cr^g$ of degree $g$ is harmonic, where $c$ is chosen ($c=0$ if $g$ is odd),  $r^2=x_0^2+\cdots +x_{n}^2$ and $F:\mathbb{R}^{n+1}\to\mathbb{R}$ is the Cartan-M\"unzner (homogeneous) polynomial of degree $g$ that satisfies $\Delta F=cr^{g-2}$ and $\Vert\nabla F\Vert^2=g^2r^{2g-2}$ ($\nabla$ and $\Delta$ are gradient and Laplacian on $\mathbb{R}^{n+1}$). Since the restriction of $G$ on $S^n$ is basic, $G\vert_{S^n}$ is a basic eigenfunction of $(S^n,\mathcal{F})$ and hence, $g(g+n-1)$ is a basic eigenvalue of $(S^n,\mathcal{F})$. Furthermore, the algebra of basic (homogeneous) polynomials is finitely generated by two generators $\rho_1=r^2$ and $\rho_2=F$. Hence the lowest degree of a basic homogeneous polynomial which restricts to a non-constant function on the sphere is $g$. This implies that $\lambda_1^B=g(g+n-1)$ is the first non-zero basic eigenvalue of $(S^n,\mathcal{F})$.

    The diameter of the leaf space $S^n/\mathcal{F}$ is exactly the distance between two singular leaves, which are level sets $V^{-1}(-1)$ and $V^{-1}(1)$ of the map $V=F\vert_{S^n}$, 
    and hence we get $\d_{S^n/\mathcal{F}}=\pi/g$ (see Satz 4, \cite{munzner1}). 
    
    Now, we choose $K=1$ and verify Theorem \ref{main} is true.

    Indeed, the inequality \ref{uuuuuu} is equivalent to
    \begin{align*}
        g(n+g-1)\geq 4s(1-s)g^2+s(n-1),
    \end{align*}and hence
    \begin{align*}
        [1-4s(1-s)]g^2+(g-s)(n-1)\geq 0.
    \end{align*}

    Since $g\geq 1$ and with a fixed $s\in (0,1)$, the function $f(x)=[1-4s(1-s)]x^2+(x-s)(n-1)$ is increasing, then $f(g)\geq f(1)=1-4s(1-s)+(1-s)(n-1)\geq 0$.

    Therefore, we confirm that Theorem \ref{main} is true in this case.
\end{example}

\section{The case of Non-negative Ricci curvature}\label{7777}

In this section, we study a lower bound for the case of singular Riemannian foliations on a compact Riemannian manifold with the assumption of \textbf{non-negative Ricci curvature}. 

\subsection{A generalization of Zhong-Yang estimate.}\label{888}

In the following theorem, we will prove a generalized Zhong-Yang estimate in the case of singular Riemannian foliation. 

In the setting, we assume $M^n$ is a compact Riemannian manifold  with non-negative Ricci curvature and $\mathcal{F}$ (if applicable) is a singular Riemannian foliation of $M$ with closed leaves and basic mean curvature.

We recall again the gradient estimate by Zhong and Yang in Proposition \ref{oo}, see \cite{zhongyang} (also see the Theorem 2.3.2 in \cite{xianguo}). Once notices that their argument works with not the first eigenvalue, but also with an arbitrary eigenvalue $\lambda>0$ in the spectrum.

Let $u\in C^\infty(M)$ be an eigenfunction with $\Delta u=-\lambda u$, where $\lambda>0$. By Green's theorem, we obtain that $\int_M{u \dv}=0$. Because $u$ is non-zero, this implies that $\max_{x\in M} u>0>\min_{x\in M} u$.

    If $\max_{x\in M} u\geq -\min_{x\in M} u$, then we can consider new function $v=u/\max_{x\in M} u$ such that $\Delta v=-\lambda v$ and $\max_{x\in M} v=1,-1\leq\min_{x\in M} v<0$. Otherwise, we consider the new function $v=-u/\min_{x\in M} u$, and then $\Delta v=-\lambda v$ and $\max_{x\in M} v=1,-1\leq \min_{x\in M} v<0$. Therefore, without loss of generality, we can assume that $u$ is an eigenfunction with respect to $\lambda$ such that
    $$1=\max_{x\in M} u>\min_{x\in M} u=-k,0<k\leq 1.$$

    Let $\varepsilon>0$ be an arbitrary small positive number and let 
    \begin{align}\label{uia}
        v_{\varepsilon}=\frac{u-\frac{1-k}{2}}{(1+\varepsilon)\frac{1+k}{2}}.
    \end{align}

    Then we see that $v_\varepsilon$ is a smooth function that satisfies
    \begin{align}
        \left\{\begin{array}{c}
             \Delta v_\varepsilon=-\lambda(v_\varepsilon+a_\varepsilon) \\
             \displaystyle\max_{x\in M} v_\varepsilon=\frac{1}{1+\varepsilon},\min_{x\in M} v_\varepsilon=-\frac{1}{1+\varepsilon}
        \end{array}\right.
    \end{align}where $a_\varepsilon=\frac{1-k}{1+k}\cdot\frac{1}{1+\varepsilon}$.

    We set $v_\varepsilon=\sin\theta_\varepsilon$, then the function $\theta_\varepsilon=\sin^{-1}v_\varepsilon$ has its range in $\left[\frac{-\pi}{2}+\delta,\frac{\pi}{2}-\delta\right]$, where $\delta$ is given by
    $$\sin\left(\frac{\pi}{2}-\delta\right)=\frac{1}{1+\varepsilon}.$$

\begin{proposition}\label{oo}
With the setting above, we have that \begin{align}\label{zzzzz}
    \vert\nabla\theta_\varepsilon\vert^2\leq \lambda(1+a_\varepsilon\psi(\theta_\varepsilon)),
\end{align}where $\psi:\left[-\frac{\pi}{2},\frac{\pi}{2}\right]\to\mathbb{R}$ is the function given by
\begin{align*}
        \left\{\begin{array}{lc}
             \psi(\theta)=\frac{4}{\pi}(\theta\sec^2\theta+\tan\theta)-2\tan\theta\cdot\sec\theta,& \theta\in\left(-\frac{\pi}{2},\frac{\pi}{2}\right) \\
             \psi\left(\frac{\pi}{2}\right)=1,\text{ }\psi\left(-\frac{\pi}{2}\right)=-1& 
        \end{array}\right.
    \end{align*}
\end{proposition}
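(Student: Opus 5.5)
This is the classical Zhong--Yang gradient estimate. I would prove it by the maximum principle applied to an auxiliary function on $M$, using only $\ric\geq 0$ and the Bochner formula; the foliation plays no role. Write $\theta=\theta_\varepsilon$, $a=a_\varepsilon$, and consider
\begin{align*}
    Q=\frac{|\nabla\theta|^2}{1+a\psi(\theta)} .
\end{align*}
First I would check that $1+a\psi(\theta)>0$ on the range of $\theta$. This holds because $|\psi|\leq 1$ on $[-\pi/2,\pi/2]$, which one checks from the explicit formula and its limits at $\pm\pi/2$, and because $0\leq a<1$. The goal is then to show $\max_M Q\leq\lambda$.

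Since $|v_\varepsilon|\leq 1/(1+\varepsilon)<1$, the function $\theta$ is smooth. Expressing the eigen-equation in terms of $\theta$ gives
\begin{align*}
    \Delta\theta=\tan\theta\,|\nabla\theta|^2-\lambda\,\frac{\sin\theta+a}{\cos\theta}.
\end{align*}
Let $x_0$ be a maximum point of $Q$, and suppose for contradiction that $Q(x_0)>\lambda$. In particular $\nabla\theta(x_0)\neq 0$. Choose an orthonormal frame at $x_0$ with $e_1=\nabla\theta/|\nabla\theta|$. At $x_0$ we have $\nabla Q=0$ and $\Delta Q\leq 0$.

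The condition $\nabla Q=0$ gives
\begin{align*}
    2\theta_{1j}=\frac{a\psi'(\theta)\,\theta_1^2}{1+a\psi}\,\delta_{1j},
\end{align*}
so it determines $\theta_{11}$ and forces $\theta_{1j}=0$ for $j\geq 2$. Next I would apply the Bochner formula
\begin{align*}
    \tfrac12\Delta|\nabla\theta|^2=|\nabla^2\theta|^2+\langle\nabla\theta,\nabla\Delta\theta\rangle+\ric(\nabla\theta,\nabla\theta).
\end{align*}
Here I drop the Ricci term, use $|\nabla^2\theta|^2\geq\theta_{11}^2$, and substitute the formula for $\Delta\theta$ and its gradient. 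Expanding $\Delta Q\leq 0$, and replacing $\theta_1^2$ by $Q(1+a\psi)$, produces an inequality at $x_0$ of the form
\begin{align*}
    0\geq Q\cdot\Phi(\theta,Q,a,\lambda),
\end{align*}
where $\Phi$ is built from $\psi$, $\psi'$, $\psi''$ and trigonometric functions of $\theta$.

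The main obstacle is the final step: showing that $\Phi>0$ whenever $Q>\lambda$, which gives the desired contradiction. Following Zhong--Yang, this reduces to the ordinary differential inequality that $\psi$ was designed to satisfy, namely
\begin{align*}
    \tfrac12\psi''\cos\theta-\psi'\sin\theta-\psi\geq \ldots
\end{align*}
together with its refined version involving $a$, with equality in the model case $\psi'' \cos\theta - 2\psi'\sin\theta=\ldots$. I would verify this by direct computation with the explicit formula for $\psi$, using $\psi(\pm\pi/2)=\pm1$ and the fact that $a\geq 0$. The terms carrying a factor of $a$ need to be handled by the sign relations between $\psi$ and $\sin\theta$, and also the identity $\psi'\cos^2\theta$ expressed in terms of $\theta$. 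This is exactly the computation in \cite{zhongyang}, in the presentation of Theorem~2.3.2 of \cite{xianguo}, and I would follow it essentially verbatim, since no foliation structure enters.
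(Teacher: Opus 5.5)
Your outline is the standard Zhong--Yang maximum-principle argument, which is all the paper offers here (its proof is just a citation to Schoen--Yau and Hang--Wang), and your quotient $Q=|\nabla\theta|^2/(1+a\psi)$ closes as you expect: at the maximum point the $O(a)$ terms cancel exactly by the ODE $\tfrac12\psi''\cos^2\theta-\psi'\sin\theta\cos\theta-\psi=-\sin\theta$ (an identity, not an inequality; your version drops factors of $\cos\theta$), leaving $(Q-\lambda)\,C\leq-\tfrac{4}{\pi}\lambda a^2\psi'\cos^2\theta$, where $C>0$ is the coefficient of $Q$. So beyond the ODE the only facts about $\psi$ you need are $|\psi|\leq 1$ and $\psi'\geq 0$, together with the identity $\tfrac12\psi'\cos\theta=\tfrac{4}{\pi}\cos\theta-(1-\psi\sin\theta)$ to see $C>0$; no further $a$-dependent inequality has to be checked.
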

\begin{remark}
   Notice that $\psi$ is an odd function which is continuous on $\left[-\frac{\pi}{2},\frac{\pi}{2}\right]$ and is the solution of the ODE
    \begin{align}\label{o}
        \psi''(\theta)-2\tan\theta\cdot\psi'(\theta)-2\sec^2\theta\cdot \psi(\theta)=-2\tan\theta\cdot\sec\theta.
    \end{align}
\end{remark}

\begin{proof}
See \cite{schoenyau}, section $4$ of chapter $3$, or also see \cite{hangwang}.

 \end{proof}   

 We are now ready to prove Theorem \ref{main1}. In the following proof, it is clear that when we have a gradient estimate, we can integrate over a geodesic joining two points of level sets of $u$ on both sides of the gradient estimate to derive an estimate for the first non-zero eigenvalue as many works in \cite{liyau, zhongyang} have done. The new thing in this proof is that we need to choose a more specific geodesic that works with the distances of leaves.
        \begin{proof}[Proof of Theorem \ref{main1}]

        Let $u\in C^\infty_B(M)$ be a normalized basic eigenfunction corresponding to the eigenvalue $\lambda_1^B$, namely, a smooth basic function $u$ on $M$ satisfying $\Delta u=-\lambda_1^B u$ and $1=\max_{x\in M} u>\min _{x\in M}u\geq -1$. 

        For any $\varepsilon>0$, let $\theta_\varepsilon=\sin^{-1}v_\varepsilon$ where $v_\varepsilon$ is defined as in \ref{uia}. Notice that $\theta_\varepsilon$ is constant on each leaf of $\mathcal{F}$, namely, $\theta_\varepsilon$ is basic.
        
        Apply Proposition \ref{oo} to $\theta_\varepsilon$ and we get that
        \begin{align}\label{uiia}
            \vert\nabla\theta_\varepsilon\vert^2\leq\lambda_1^B(1+a_\varepsilon\psi(\theta_\varepsilon)),
        \end{align} and hence
        \begin{align}
            \label{aaaaa}\frac{\vert\nabla\theta_\varepsilon\vert}{\sqrt{1+a_\varepsilon\psi(\theta_\varepsilon)}}\leq\sqrt{\lambda_1^B}.
        \end{align}

        Let $x\in M$ such that $\theta_\varepsilon(x)=-\frac{\pi}{2}+\delta$.

        Choose $y$ from the level set $A=\theta_\varepsilon^{-1}(\frac{\pi}{2}-\delta)$ (which is compact and is a union of leaves of $\mathcal{F}$) minimizing the distance between $x$ and $A$ and let $\gamma
        $ be the geodesic joining $x$ and $y$. That means
        \begin{align}
            L(\gamma)=\d(x,y)=\d(x,A).
        \end{align}

        This geodesic $\gamma$ is also the shortest geodesic from $x$ to $L_y\subset A$, which means $d(x,A)=d(x,L_y)$.

        Since leaves of $\mathcal{F}$ are closed, the global equidistance of $\mathcal{F}$ indicates that $\d(x,L_y)=d(L_x,L_y)$ and hence
        \begin{align}
            L(\gamma)=\d(L_x,L_y)\leq\sup_{p,q\in M}\d(L_p,L_q)=d_{M/\mathcal{F}}
        \end{align}

        In \ref{aaaaa}, take the integral along $\gamma$, we get that
        \begin{align}\label{sf}
            \d_{M/\mathcal{F}}\sqrt{\lambda_1^B}\geq L(\gamma)\sqrt{\lambda_1^B}&=\int_\gamma{\sqrt{\lambda_1^B }\d t}\geq\int_\gamma\frac{\vert\nabla\theta_\varepsilon\vert}{\sqrt{1+a_\varepsilon\psi(\theta_\varepsilon)}} \d t\geq\left\vert\int_\gamma{\frac{\nabla\theta_\varepsilon\cdot \d r}{\sqrt{1+a_\varepsilon\psi(\theta_\varepsilon)}}}\right\vert.
        \end{align}

        Because $\psi$ is an odd function, we see that
        \begin{align}\label{sg}
            \left\vert\int_\gamma{\frac{\nabla\theta_\varepsilon\cdot \d r}{\sqrt{1+a_\varepsilon\psi(\theta_\varepsilon)}}}\right\vert&=\int_{-\frac{\pi}{2}+\delta}^{\frac{\pi}{2}-\delta}{\frac{\d\theta}{\sqrt{1+a_\varepsilon\psi(\theta)}}}\nonumber\\&=\int_{0}^{\frac{\pi}{2}-\delta
            }\left(\frac{1}{\sqrt{1+a_\varepsilon\psi(\theta)}}+\frac{1}{\sqrt{1-a_\varepsilon\psi(\theta)}}\right)\d\theta\nonumber\\&=2\int_0^{\frac{\pi}{2}-\delta}\left(1+\sum_{k=1}^{\infty}{\frac{(4k-1)!!}{(4k)!!}a_\varepsilon^{2k}\psi^{2k}(\theta)}\right)\d\theta\\&\geq 2\left(\frac{\pi}{2}-\delta\right)+2\frac{3!!}{4!!}a_\varepsilon^2\int_0^{\frac{\pi}{2}-\delta}\psi^2(\theta)\d\theta\nonumber\\&=\pi-2\delta+\frac{3}{4}a_\varepsilon^2\int_0^{\frac{\pi}{2}-\delta}\psi^2(\theta)d\theta\nonumber.
        \end{align}where ``$!!$'' is for double factorial.
        
        Finally, by letting $\varepsilon\to 0$, which also makes $\delta\to 0$, \ref{sf} and \ref{sg} derive that 
        \begin{align}\label{uu}
            d_{M/\mathcal{F}}\sqrt{\lambda_1^B}\geq \pi+\frac{3}{4}a^2\int_0^{\frac{\pi}{2}}\psi^2(\theta)\d\theta,
        \end{align}where $a=\frac{1-k}{1+k}$.
        
        Therefore,
        \begin{align*}
            \lambda_1^B\geq\frac{\pi^2}{d_{M/\mathcal{F}}^2}.
        \end{align*}

\end{proof}

\subsection{The rigidity of generalized Zhong-Yang estimate.}\label{ililil}

In this section, we determine the equality case $\lambda_1^B=\pi^2/\d_{M/\mathcal{F}}^2$. For simplicity, we assume that $M$ is connected.

First, we will recall a remarkable works of G.Xu and X.Xue that will be used in our main theorem of this section. More precisely, we will use Proposition 3.2 and a part that is modified from the statement of Theorem 4.1 (1), see \cite{xuxue}. Here we try to provide the outline of each proof, and for more detail, please visit \cite{xuxue}, pages 8 - 13.

\begin{proposition}[Proposition 3.2 (N) in \cite{xuxue}]\label{uiiiiia}
    For compact Riemmanian manifold without boundary $(M^n,g)$ and with $\ric\geq 0$. Assume $\Delta u=-\lambda u,\max _{x\in M^n}\vert u(x)\vert=1$, where $\lambda>0$. If $\vert \nabla\sin^{-1}u\vert^2(p)=\lambda$ holds at some point $p$ with $\vert u(p)\vert<1$, then $N^{n-1}:=u^{-1}(u(p))\cap M_p$ is connected, where $M_p$ is the connected component of $\{x\in M:\vert\nabla u(x)\vert>0\}$ containing $p$. Furthermore, $M_p$ is isometric to $\left(-\frac{\pi}{2\sqrt{\lambda}},\frac{\pi}{2\sqrt{\lambda}}\right)\times N^{n-1}$ and $\partial N^{n-1}=\emptyset$.
\end{proposition}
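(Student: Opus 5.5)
The approach is the standard splitting argument via the maximum principle and the Bochner formula. Write $\theta=\sin^{-1}u$ and
\[
F=|\nabla\theta|^2=\frac{|\nabla u|^2}{1-u^2},
\]
defined on the open set $\{|u|<1\}$.

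\textbf{Step 1.} Since $\max_{x\in M}|u|=1$ we are in the symmetric case $a=0$. Letting $\varepsilon\to0$ in Proposition \ref{oo}, the estimate \eqref{zzzzz} becomes $F\le \lambda$ on $\{|u|<1\}$. Strictly, Proposition \ref{oo} was stated after normalizing $\max u=1$; if instead $\min u=-1$, apply it to $-u$. In either case one also gets the Li--Yau type bound $|\nabla u|^2\le\lambda(1-u^2)$.

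\textbf{Step 2: equality propagates.} On $M_p$ we compute $\Delta F$ using Bochner's formula, $\ric\ge0$ and $\Delta u=-\lambda u$. As in the proof of the gradient estimate, this gives an elliptic differential inequality
\[
\Delta F+\langle X,\nabla F\rangle\ \ge\ 0
\]
for a smooth vector field $X$, which is locally bounded where $|\nabla u|>0$ and $|u|<1$. In fact $\lambda-F\ge0$ satisfies the opposite inequality. Since $\lambda-F$ attains the interior minimum $0$ at $p$, the strong maximum principle gives $F\equiv\lambda$ on the connected set $M_p$.

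Going back through the Bochner computation, every inequality must then be an equality. Hence $\ric(\nabla u,\nabla u)=0$, and the Cauchy--Schwarz step $|\He u|^2\ge|\nabla|\nabla u||^2$ is an equality, which forces
\[
\He u=-\lambda u\,\frac{\nabla u\otimes\nabla u}{|\nabla u|^2}.
\]
Consequently $\He\theta=0$ on $M_p$, because the $\tan\theta\,d\theta\otimes d\theta$ terms cancel exactly when $|\nabla\theta|^2=\lambda$.

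\textbf{Step 3: splitting.} On $M_p$ the vector field $\nabla\theta/\sqrt\lambda$ is a parallel unit field, so $M_p$ is locally a Riemannian product of an interval with the level sets of $\theta$. The flow of this field is by isometries and moves $\theta$ at unit speed times $\sqrt\lambda$. Take $N=u^{-1}(u(p))\cap M_p$; it is totally geodesic, and the map
\[
(t,x)\mapsto \exp_x\!\big(t\,\nabla\theta/|\nabla\theta|\big)
\]
is a local isometry into $M_p$.

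We claim $M_p$ is the union of the flow lines through $N$, so the map is defined on the full range $t\in\left(-\frac{\pi}{2\sqrt\lambda},\frac{\pi}{2\sqrt\lambda}\right)$. Along a flow line, $\theta$ is linear with slope $\sqrt\lambda$. The curve stays where $|\nabla u|=\sqrt\lambda\cos\theta>0$ until $\theta\to\pm\pi/2$. Completeness of $M$ lets us continue it up to that time, which is exactly $\pm\frac{\pi}{2\sqrt\lambda}$ from $N$.

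Injectivity holds because $\theta$ separates the slices and the flow is a bijection between level sets. Hence $M_p\cong\left(-\frac{\pi}{2\sqrt\lambda},\frac{\pi}{2\sqrt\lambda}\right)\times N$ isometrically.

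\textbf{Step 4: connectedness and $\partial N=\emptyset$.} Connectedness of $N$ follows because $M_p$ is connected and the projection $M_p\to N$ along the flow is continuous and surjective. $N$ has no boundary since it is a regular level set intersected with the open set $M_p$, and it is closed in $M_p$.

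\textbf{Main obstacle.} I expect Step 2 to be the hardest part. One must carry out the Bochner computation for $F$ carefully so that the maximum principle applies to $\lambda-F$ without assuming extra structure, including handling the coefficient singularities only away from $\{\nabla u=0\}\cup\{|u|=1\}$. One must also extract the exact Hessian identity from the equality case.
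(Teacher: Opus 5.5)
Your route is the same as the paper's outline of Xu--Xue: propagate the equality over $M_p$ by a maximum principle, get $\He\theta\equiv0$, flow along the parallel unit field $\nabla\theta/\sqrt\lambda$ over its maximal interval, and read off connectedness of $N$ from that of $M_p$. Steps 3--4 are essentially fine.

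\textbf{Step 1 is wrongly justified.} $\max_M|u|=1$ does \emph{not} give $a=0$. With $\max u=1$ and $\min u=-k$ one has $a=\frac{1-k}{1+k}$, which vanishes only if $\max u=-\min u$. For $a\neq0$, Proposition \ref{oo} bounds $\sin^{-1}$ of the shifted function $v_\varepsilon$, not $F=|\nabla\sin^{-1}u|^2$. The global bound $F\le\lambda$ is indispensable: without it, $F(p)=\lambda$ is not an extremum and Step 2 has nothing to propagate. You can prove it directly. Suppose $G_\varepsilon=|\nabla u|^2+\lambda(1+\varepsilon)u^2$ attains its maximum at a point with $\nabla u\neq0$. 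Then $\nabla G_\varepsilon=0$ gives $\He u(e_1,e_1)=-\lambda(1+\varepsilon)u$ for $e_1=\nabla u/|\nabla u|$. Bochner with $\ric\ge0$ then gives $\tfrac12\Delta G_\varepsilon\ge\varepsilon\lambda^2(1+\varepsilon)u^2+\varepsilon\lambda|\nabla u|^2>0$, which is impossible at a maximum. Hence $G_\varepsilon\le\lambda(1+\varepsilon)$, and letting $\varepsilon\to0$ yields $|\nabla u|^2\le\lambda(1-u^2)$.

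\textbf{Step 2 misstates the differential inequality.} Bochner does not give $\Delta F+\langle X,\nabla F\rangle\ge0$; there is a zeroth-order term. Using $\Delta\theta=\tan\theta\,(F-\lambda)$, one has exactly
\[
\tfrac12\Delta F-\tan\theta\,\langle\nabla\theta,\nabla F\rangle=|\He\theta|^2+\ric(\nabla\theta,\nabla\theta)-\sec^2\theta\,F\,(\lambda-F).
\]
So $w=\lambda-F\ge0$ satisfies $\Delta w-2\tan\theta\,\langle\nabla\theta,\nabla w\rangle-2\sec^2\theta\,F\,w\le0$. The zeroth-order coefficient is nonnegative and locally bounded on $M_p$, where $|u|<1$ automatically because $|u|=1$ only at critical points. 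Since $w(p)=0$, Hopf's strong maximum principle gives $w\equiv0$ on $M_p$. Plugging $F\equiv\lambda$ back in gives $|\He\theta|^2+\ric(\nabla\theta,\nabla\theta)=0$, hence $\He\theta\equiv0$ directly, with no Cauchy--Schwarz equality analysis.

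\textbf{Two small points in Steps 3--4.}
\begin{itemize}
\item The flow line through $x\in N$ lives on $t\in\big(\frac{-\pi/2-\theta(p)}{\sqrt\lambda},\frac{\pi/2-\theta(p)}{\sqrt\lambda}\big)$. This is a translate of the stated interval unless $u(p)=0$; the paper avoids this by slicing at $\{u=0\}$.
\item To know that $N$ is a closed hypersurface you need it closed in $M$, not just in $M_p$. This holds because a limit point $q$ of $N$ has $|\nabla u(q)|^2=\lambda(1-u(p)^2)>0$, and therefore lies in $M_p$.
\end{itemize}
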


\begin{proof}[Outline of the proof]
    There are four steps in the proof:

    \textbf{Step 1:} Let $V=\{x\in M_p\vert\vert\nabla\sin^{-1} u\vert(x)=\sqrt{\lambda}\}$. Once can show that $V=M_p$ and we conclude that $\vert\nabla\sin^{-1} u\vert(x)=\sqrt{\lambda}$ for any $x\in M_p$.

    Let $\Theta(x)=\frac{1}{\sqrt{\lambda}}\sin^{-1} u(x)$ for $x\in M_p$. Direct computations derive
    \begin{align}\label{uiiiia}
        \vert \He\Theta\vert\equiv0\text{ and }\vert\nabla\Theta\vert\equiv 1\text{ in }M_p.
    \end{align}

    \textbf{Step 2:} Define $\mathcal{L}_t=\{x\in M_p:\Theta(x)=t\}$. Then, from $\vert\nabla\Theta\vert\equiv 1$ and the Implicit Function Theorem, we can see that $\mathcal{L}_t$ is $(n-1)$-dim submanifold of $M^n$ if $\mathcal{L}_t\neq \emptyset$.

    For any $x\in M_p$, consider the gradient curve $\gamma_x$ of $\nabla\Theta$ with $\gamma_x(0)=x$ in $M_p$. The fact \ref{uiiiia} implies that $\Theta:M_p\to\left(-\frac{\pi}{2\sqrt{\lambda}},\frac{\pi}{2\sqrt{\lambda}}\right)$ is a distance function (and a Riemannian submersion) and $\gamma_x$ is a horizontal geodesic starting at $x\in M_p$.

    Then we can show that for any $x\in M_p$, $\gamma_x\cap\mathcal{L}_0$ consists of only one point, and we denote by $\mathcal{P}(x)$. Of course, we can see that $\mathcal{P}(x)=\gamma_x(-\Theta(x))\in M_p$.

    \textbf{Step 3:} The maximal interval that  $\gamma$ can be defined is $\left(-\frac{\pi}{2\sqrt{\lambda}},\frac{\pi}{2\sqrt{\lambda}}\right)$. Hence, we can define the map
    \begin{align}
        \psi(x)=(\Theta(x),\mathcal{P}(x)):M_p\to\left(-\frac{\pi}{2\sqrt{\lambda}},\frac{\pi}{2\sqrt{\lambda}}\right)\times\mathcal{L}_0.
    \end{align}

    This is a diffeomorphism.

    \textbf{Step 4:} We confirm that $\psi$ is an isometry and $N^{n-1}=u^{-1}(u(p))\cap M_p$ is isometric to $\mathcal{L}_0$.

    The connectedness of $M_p$ implies the connectedness of $N^{n-1}$ and we also obtain $\partial N^{n-1}=\emptyset$.
\end{proof}

In the next result of G.Xu and X.Xue that we want to use, the statement of Theorem 4.1 (1) in \cite{xuxue} does not seem to be correct as stated. More precisely, they claimed that with the same given conditions as in Proposition \ref{uiiiiia} and in addition, $\partial M=\emptyset$, then $M^n$ is isometric to $S^1\left(\frac{\mathcal{N}(u)}{2\sqrt{\lambda}}\right)\times N^{n-1}$, where $\mathcal{N}(u)$ is the number of nodal domains of $u$ and $S^1\left(\frac{\mathcal{N}(u)}{2\sqrt{\lambda}}\right)$ is a round circle with radius $\frac{\mathcal{N}(u)}{2\sqrt{\lambda}}$ and $\mathcal{N}(u)=2k$ for some $k\in\mathbb{Z}_+$. We provide a class of counterexamples for this statement: consider  $M=\mathbb{R}\times_\mathbb{Z} N=\mathbb{R}\times N/\mathbb{Z}$ where $N^{n-1}$ is an $(n-1)-$dimensional compact Riemannian manifold of nonnegative Ricci curvature and the $\mathbb{Z}-$action is defined by $k\cdot (t,x)=\left(t+\frac{2k\pi}{\sqrt{\lambda}},\varphi^k(x)\right)$, where $k\in\mathbb{Z},(t,x)\in \mathbb{R}\times N$ and $\varphi:N\to N$ is an isometry on $N$. As mentioned earlier in Theorem \ref{main2}, $M=\left[0,\frac{2\pi}{\sqrt{\lambda}}\right]\times N/\sim$ is the mapping torus of $\varphi$, where $(0,x)\sim \left(\frac{2\pi}{\sqrt{\lambda}},\varphi(x)\right)$. The function $u([(t,x)])=-\cos(\sqrt{\lambda}t)$ is the eigenfunction of $\lambda$ that satisfies the given conditions with the number of nodal domains $\mathcal{N}(u)=2$. In the case of a unit circle $N=S^1$, $\lambda=1$ and we obtain torus, Klein bottle (not a product) with the corresponding isometries $\varphi$ are the identity, the reflection respectively. We can even have many good examples of orientable manifolds that are not products by constructing mapping tori of some isometries, for example, we choose a torus $N=\mathbb{T}^2=S^1\times S^1=\mathbb{R}^2/\mathbb{Z}^2$ and construct a mapping torus of the isometry $\varphi=\left[\begin{matrix}
    -1&0\\
    0&-1
\end{matrix}\right]$. This is one of the six compact, connected orientable flat
Riemannian Seifert manifolds. Recall that a Seifert manifold is a closed 3-dimensional manifold together with a decomposition into a disjoint union of circles (called \textit{fibers}) such that each fiber has a tubular neighborhood that forms a standard fibered torus. If the mapping torus $M_\varphi$ of $\varphi=\left[\begin{matrix}
    -1&0\\0&-1
\end{matrix}\right]$ is a product of $S^1(r)\times P$ where $S^1(r)$ is a circle of some radius $r$ and $P$ is a compact $2$-dimensional manifold, then we imply that $P$ is flat. To make the product is orientable and connectedness, the only possibility of $P$ is to be a torus, and then, $S^1(r)\times P$ is also a compact, connected orientable flat
Riemannian Seifert manifold which can be distinguished with $M_\varphi$ as classified. For more detail of the classification, see Theorem 2,  section 8.2 in \cite{orlik}.

Therefore, in the following theorem, we recall the given conditions from Theorem 4.1(1) of \cite{xuxue} with a modified conclusion and a modified argument for Step 3 of the proof of Theorem 4.1(1) in \cite{xuxue}.
\begin{theorem}[A modification of Theorem 4.1 (1) in \cite{xuxue}]\label{uoa}
   For compact Riemannian manifold $(M^n,g)$ without boundary and with $\ric\geq 0$. Assume $\Delta u=-\lambda u,\max_{x\in M^n}\vert u(x)\vert=1$, where $\lambda>0$. If $\vert\nabla\sin^{-1} u\vert^2 (p)=\lambda$ holds at some point, where $\vert u(p)\vert<1$. Define $\mathcal{N}(u)=\#(M^n-u^{-1}(0))$ as the number of connected components of $M^n-u^{-1}(0)$ (the number of nodal domains of $u$). Then $\mathcal{N}(u)=2k$ for some $k\in\mathbb{Z}_+$ and $M^n$ is isometric to $\mathbb{R}\times_\mathbb{Z} N^{n-1}=\mathbb{R}\times N^{n-1}/\mathbb{Z}$, where $N^{n-1}$ is a compact Riemannian manifold without boundary of $\ric\geq 0$ and $\mathbb{Z}$ acts on $\mathbb{R}\times N^{n-1}$ by the action $k\cdot (t,x)=\left(t+k\frac{\mathcal{N}(u)\pi}{\sqrt{\lambda}},\varphi^k(x)\right)$ where $k\in \mathbb{Z},(t,x)\in \mathbb{R}\times N$ and $\varphi:N\to N$ is an isometry of $N$.   
\end{theorem}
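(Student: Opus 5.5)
We start from Proposition \ref{uiiiiia}. It gives that on the component $M_p$ of $\{|\nabla u|>0\}$ containing $p$, the function $\Theta=\frac{1}{\sqrt{\lambda}}\sin^{-1}u$ satisfies $|\nabla\Theta|\equiv 1$ and $\He\,\Theta\equiv 0$. It also gives an isometry $\psi=(\Theta,\mathcal{P}):M_p\to I\times\mathcal{L}_0$, where $I=\left(-\frac{\pi}{2\sqrt{\lambda}},\frac{\pi}{2\sqrt{\lambda}}\right)$ and $\mathcal{L}_0$ is a connected closed hypersurface. We set $N:=\mathcal{L}_0$; it is compact, being a closed subset of the compact level set $u^{-1}(0)$ after we show $\overline{M_p}\cap u^{-1}(0)=\mathcal L_0$. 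We then globalize this local product structure.

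\textbf{Step 1 (eigenfunction along normal geodesics).} In $M_p$ we have $u=\sin(\sqrt{\lambda}\,t)$ along each unit-speed gradient geodesic $\gamma_x$. Fix $x\in N$ and extend $\gamma_x(t)$ as a geodesic of $M$ for all $t\in\mathbb{R}$; this is possible since $M$ is compact. The function $f(t)=u(\gamma_x(t))$ satisfies $f''=\He u(\dot\gamma,\dot\gamma)$. Because $|\nabla \sin^{-1}u|^2=\lambda$ holds on an open set, we want $u(\gamma_x(t))=\sin(\sqrt\lambda t)$ for all $t$. We argue this by continuation, which is the key step. At $t_0=\frac{\pi}{2\sqrt\lambda}$ the point $q=\gamma_x(t_0)$ is a maximum, so $|\nabla u|(q)=0$. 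Just past $t_0$, $u$ decreases, and we claim the geodesic enters another component $M_{p'}$ of $\{|\nabla u|>0\}$ on which the equality $|\nabla\sin^{-1}u|^2=\lambda$ again holds. This holds by continuity of $|\nabla\sin^{-1}u|^2$ near $q$, approached via the analytic expansion $u\approx 1-\frac{\lambda}{2}s^2$, combined with the rigidity in Step 1 of Proposition \ref{uiiiiia}. In fact it is cleaner to observe that the set $W$ of points where $|\nabla\sin^{-1}u|^2=\lambda$ (interpreted appropriately at $|u|=1$) is closed. By Proposition \ref{uiiiiia} it is also open near points with $|u|<1$. Near the critical points we use the reflection $\gamma_x(t_0+s)$ versus $\gamma_x(t_0-s)$ and uniqueness for the ODE $f''=-\lambda f$ with $f(t_0)=1$, $f'(t_0)=0$. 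This yields $W=M$ by connectedness, so $\Theta$ is a local distance function away from $\{|u|=1\}$ everywhere, with $f(t)=\sin(\sqrt\lambda t)$ globally. We expect this continuation across the critical set $\{|u|=1\}$ to be the main obstacle. There we would use that $\{u=1\}$ is a totally geodesic copy of $N$, since it is the limit of the parallel hypersurfaces $\mathcal{L}_t$ with vanishing second fundamental form.

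\textbf{Step 2 (covering map).} Define $\Phi:\mathbb{R}\times N\to M$ by $\Phi(t,x)=\gamma_x(t)=\exp_x(t\nabla\Theta(x))$. On $I\times N$, $\Phi$ coincides with $\psi^{-1}$, hence is a local isometry. Parallel hypersurfaces of a totally geodesic hypersurface with a parallel unit normal field remain isometric; concretely, the Jacobi fields along $\gamma_x$ generated by variations in $N$ are parallel because $\He\Theta=0$ and the normal field extends parallelly through $\{|u|=1\}$. Hence $\Phi$ is a local isometry on all of $\mathbb{R}\times N$. Since $\mathbb{R}\times N$ is complete, $\Phi$ is a Riemannian covering, and it is surjective because $M$ is connected.

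\textbf{Step 3 (deck group and nodal count).} The deck group $\Gamma$ acts by isometries of $\mathbb{R}\times N$ preserving the lifted function $\sin(\sqrt\lambda t)$ and the foliation $\{t\}\times N$. Since $N$ is compact and the $\mathbb{R}$-factor is a line, de Rham-type splitting (or simply the preservation of $\partial_t$ up to sign, together with preservation of $u$) forces each element to be of the form $(t,x)\mapsto(t+c,\varphi(x))$ with $c\in\frac{2\pi}{\sqrt\lambda}\mathbb{Z}$. A reflection in $t$ is impossible because it would not preserve $\sin$ together with the orientation of $\nabla u$. Thus $\Gamma\cong\mathbb{Z}$ is generated by $(t,x)\mapsto(t+T,\varphi(x))$ with $T=\frac{2\pi m}{\sqrt\lambda}$. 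The nodal set downstairs is the image of $\{t\in\frac{\pi}{\sqrt\lambda}\mathbb{Z}\}\times N$, which cuts a fundamental domain $[0,T)\times N$ into $2m$ slabs, each connected since $N$ is connected. Hence $\mathcal{N}(u)=2m=:2k$ and $T=\frac{\mathcal{N}(u)\pi}{\sqrt\lambda}$, as claimed. Finally, $\ric_N\ge0$ follows from $\ric_M\ge0$ on the product $\mathbb{R}\times N$.
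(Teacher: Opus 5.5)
There is a genuine gap, and the decisive one is in Step 3: your exclusion of reflections is false. Any isometry preserving the lifted $u$ automatically preserves $\nabla u$, so the ``orientation of $\nabla u$'' adds nothing. A reflection $t\mapsto c-t$ preserves $\sin(\sqrt{\lambda}\,t)$ exactly when $\sqrt{\lambda}\,c\in\pi+2\pi\mathbb{Z}$, i.e.\ when it is the reflection in a critical level $\{u=\pm1\}$. Composed with a fixed-point-free isometry of $N$, such a map can be a deck transformation.

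Concretely, let $\Gamma$ be the group of isometries of $\mathbb{R}\times\mathbb{R}/2\pi\mathbb{Z}$ generated by $(t,s)\mapsto(\pi-t,s+\pi)$ and $(t,s)\mapsto(t+2\pi,s)$. It acts freely, and the quotient $M$ is a flat Klein bottle. The function $u=\sin t$ descends with $\lambda=1$, satisfies $|\nabla\sin^{-1}u|^2=1$ wherever $|u|<1$, and has $\mathcal{N}(u)=2$. But $\Gamma$ is infinite dihedral, and $u^{-1}(1)$ is a one-sided closed geodesic of length $\pi$ that $M_p$ approaches from both sides. This $M$ is not isometric to any $\mathbb{R}\times_{\mathbb{Z}}N$ with translation $2\pi$. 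The only parallel unit $1$-forms on $M$ are $\pm ds$, and $\int ds=\pi$ along $u^{-1}(1)$. On such a mapping torus, by contrast, the parallel form $dt$ has all its periods in $2\pi\mathbb{Z}$.

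So Step 3 cannot be repaired for the statement as written. What your covering argument actually gives is a dichotomy. Either $\Gamma$ is infinite cyclic, generated by $(t,x)\mapsto(t+2\pi m/\sqrt{\lambda},\varphi(x))$, and then your slab count and the stated conclusion go through. Or $\Gamma$ is generated by two reflections in critical levels composed with isometries of $N$ (a ``folded'' case). The paper's own return-map argument (``$\gamma_y$ meets $P$ again after traversing all $\mathcal{N}(u)$ nodal domains'') tacitly assumes the first alternative too: in the example, $\gamma_y$ returns to $P$ at time $\pi$, from the opposite side.

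Separately, your Step 1 is circular at the critical set. Along $\gamma_x$ you know $f''=-\lambda f$ only for $|t|<\frac{\pi}{2\sqrt{\lambda}}$. Past $t_0$ you only have $f''=\He u(\dot\gamma,\dot\gamma)$, about which nothing is known. Since $u$ is merely smooth, agreeing with the full jet of $\sin(\sqrt{\lambda}\,t)$ at $t_0$ forces nothing afterwards. Thus the reflection/ODE-uniqueness argument presupposes exactly what must be shown, namely that $W$ is open at points of $\{|u|=1\}$. Total geodesicity of $\overline{M_p}\cap u^{-1}(1)$, which you can get from the parallel Jacobi fields, says nothing about the metric on the far side.

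The paper handles this crossing differently. Following Xu--Xue, it takes $N=u^{-1}(\pm1)\cap\overline{M_p}$ and sets $F=\frac{|\nabla u|^2}{1-u^2}-\lambda$ on a one-sided collar $\overline{U^-_{r_0}(N)}$, with $F=0$ on $N$. It then invokes the claim that some interior point $z$ satisfies $F(z)=\max F=0$, and reapplies Proposition \ref{uiiiiia} on the component $M_z$. You need an argument of that kind. Note that it, too, presupposes that the far side of $N$ lies outside $M_p$, which is precisely what fails in the folded case.
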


\begin{proof}
    There are three steps in this proof:

    \textbf{Step 1 and Step 2 (see proof of Theorem 4.1 in \cite{xuxue}):} Let $N^{n-1}=u^{-1}(-1)\cap \overline{M_p}$, which is an $(n-1)$-dim hypersurface in $M^n$ by Proposition \ref{uiiiiia} (if $u^{-1}(-1)\cap \overline{M_p}=\emptyset$, we define $N^{n-1}=u^{-1}(1)\cap \overline{M_p}$, similar argument applies); and $U_\varepsilon(N^{n-1})=\{x\in M^n:\d(x,N^{n-1})<\varepsilon\}$. Then we get a local chart $\{x_1,x'\}$ for $U_{r_0}(N^{n-1})$, where $x'=(x_2,\cdots,x_n)$ is a coordinate chart for $N^{n-1}$ and
    \begin{align*}
        x_1\vert_{M_p\cap U_{r_0}(N^{n-1})}=\d(x,N^{n-1}),\text{ }x_1\vert_{U_{r_0}(N^{n-1})-M_p}=-\d(x,N^{n-1}),
    \end{align*}where $r_0>0$ is to be chosen such that
    \begin{align*}
        \vert\nabla u\vert_{\overline{U_{r_0}(B^{n-1})}-N^{n-1}}\vert>0, \text{ and }\vert u\vert_{\overline{U_{r_0}(N^{n-1})}-N^{n-1}}\vert<1.
    \end{align*}

    Define $U_{r_0}^-(N^{n-1}):=U_{r_0}(N^{n-1})\cap \{x\in M^n:x_1<0\}$ and $F:\overline{U_{r_0}^-(N^{n-1})}\to\mathbb{R}$ as follows:
    \begin{align*}
        F(x):=\left\{\begin{array}{ll}
            \frac{\vert\nabla u\vert^2}{1-u^2}(x)-\lambda, & x\in\overline{U_{r_0}^-(N^{n-1})}-N^{n-1}  \\
             0,&x\in N^{n-1} 
        \end{array}\right..
    \end{align*}

    \textbf{Step 3:} As claimed in \cite{xuxue}, for any $q\in N^{n-1}$, we can find $z\in U_{r_0}^-(N^{n-1})$ such that $F(z)=F(q)=0$ and $F(z)=\max_{x\in\overline{U_{r_0}^-(N^{n-1})}}F(x)=0$. Note $\vert\nabla u(z)\vert>0,\vert u(z)\vert<1$ and $\vert\nabla\sin^{-1} u(z)\vert^2=\lambda$; from Proposition \ref{uiiiiia}, we get the local splitting domain on $M_z$, a component of $\{x\in M:\vert\nabla u(x)\vert>0\}$ containing $z$.

    Hence, we now have that any two components of two level sets $u^{-1}(a)$ and $u^{-1}(b)$ are isometric. Let $P$ be a component of $u^{-1}(0)$. Then, for any $y\in P$, consider the (horizontal) geodesic $\gamma_y:[0,\infty)\to M$ where $\gamma_y(0)=0$ and $\gamma_y'(0)=\nabla \Theta(y)$ (recall from Proposition \ref{uiiiiia} that $\Theta=\frac{1}{\sqrt{\lambda}}\sin^{-1}u$). We see that the geodesic will intersect $P$ the second time by traversing along all $\mathcal{N}(u)$ nodal domains, and hence, we can define an isometry $\varphi:P\to P$ such that $y\mapsto \gamma_y\left(\frac{\mathcal{N}(u)\pi}{\sqrt{\lambda}}\right)$ (notice that it is not necessary to have $y=\gamma_y\left(\frac{\mathcal{N}(u)\pi}{\sqrt{\lambda}}\right)$). 

    \begin{figure}[h!]
  \centering
        \begin{tikzpicture}[scale=.4]
    \draw (8,0) arc[start angle=0, end angle=360, x radius= 8, y radius=4];
    \draw (-2,0) arc[start angle=0, end angle=180, x radius =3, y radius=2];
    \draw[dashed] (-8,0) arc[start angle=180, end angle=360, x radius=3, y radius=2];
    \draw[thick] (-2,0) .. controls (-1,1) and (1,1) .. (2,0);
    \draw[thick] (-2.5,0.2) .. controls (-1,-0.5) and (1, -0.5)..(2.5,0.2); 
    \filldraw[black] (-6,1) circle [radius=4pt]
    (-4,-1) circle[radius=4pt];
    
    \draw[dotted, very thick,->] (-6,1) node[below]{$y$} .. controls (-3,3) and (3,3)  .. (6,1);
    
    \draw[dotted, very thick,->](6,1) ..controls (6.85555,0.3555) and (6.85555,-0.3555).. (6,-1);
    \draw[dotted, very thick,->] (6,-1)..controls (4,-2.5) and (-2,-3) ..(-4,-1) node[left]{$\varphi(y)$};
    \draw(-8,-0.5) node [right]{$P$}
    (3,3) node{$\gamma_y$};
    \draw(1,-2.5) node[below]{$M$};
    \draw[dotted] (-4,-1)..controls  (-4.2,-0.5) and (-4.2, -0.25) .. (-4,0);
    \draw[densely dotted] (-4,0) .. controls (-3.5,0.75) and (-1.5,1.5) .. (0,1.5);
\end{tikzpicture}
\caption{\footnotesize The picture illustrates how the geodesic $\gamma_y$, from $y=\gamma_y(0)\in P$, traverses along all nodal domains and turn back to $P$ the second time at $\varphi(y)=\gamma_y\left(\frac{\mathcal{N}(u)}{\sqrt{\lambda}}\right)$.}
    \label{haha}

    \end{figure}
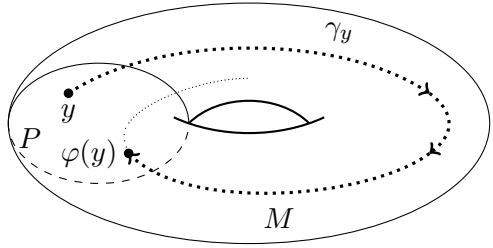
    Therefore, we can conclude that $M$ is isometric to $\mathbb{R}\times_\mathbb{Z} N$ where $N$ is a compact Riemannian manifold of $\ric\geq 0$ and $\mathbb{Z}$ acts on $\mathbb{R}\times N$ by the action $k\cdot (t,x)=\left(t+k\frac{\mathcal{N}(u)\pi}{\sqrt{\lambda}},\varphi^k(x)\right)$ where $k\in \mathbb{Z},(t,x)\in \mathbb{R}\times N$ and $\varphi:N\to N$ is an isometry of $N$.

\end{proof}

Turn back to our rigidity problem, we prove the Theorem \ref{main2}:

\begin{proof}[Proof of Theorem \ref{main2}]
Consider the normalized basic eigenfunction $u$ of $\lambda_1^B$ as described in Theorem \ref{main1}.

    With the assumption $\lambda_1^B=\pi^2/\d_{M/\mathcal{F}}^2$, from \ref{uu}, we get $a=0$ and hence $k=1$. Remember that in the proof of Theorem \ref{main1}, we have to let $\varepsilon\to 0$, so from \ref{uiia}, \ref{sf} and \ref{sg}, we get 
\begin{align}\label{uiiia}
    \vert\nabla\theta\vert^2=\lambda_1^B \text{ almost everywhere on }\gamma,
\end{align} where $\theta=\sin^{-1}u$. Hence, we can confirm the existence of $p\in M$ such that $\vert\nabla\sin^{-1} u(p)\vert^2=\lambda_1^B$ where $\vert u(p)\vert<1$.

Apply Theorem \ref{uoa}, we get that $M$ is isometric to a $\mathbb{R}\times_\mathbb{Z} N^{n-1}$ where $N^{n-1}$ is a compact Riemannian manifold of $\ric\geq 0$ and $\mathbb{Z}$ acts on $\mathbb{R}\times N$ by the action $k\cdot (t,x)=\left(t+k\frac{\mathcal{N}(u)\pi}{\sqrt{\lambda_1^B}},\varphi^k(x)\right)$ where $k\in \mathbb{Z},(t,x)\in \mathbb{R}\times N$ and $\varphi:N\to N$ is an isometry of $N$.  Let $\Tilde{\psi}: M\to \mathbb{R}\times_\mathbb{Z} N$ be that isometry and $\Tilde{p}:\mathbb{R}\times_\mathbb{Z}N\to \mathbb{R}/\mathbb{Z}\cong S^1\left(\frac{\mathcal{N}(u)}{2\sqrt{\lambda_1^B}}\right)$ is the induced map from the $\mathbb{Z}-$equivariant projection $p:\mathbb{R}\times N\to\mathbb{R}$ with the $\mathbb{Z}-$action on $\mathbb{R}$ is defined by the translation $k\cdot t=t+k\frac{\mathcal{N}(u)\pi}{\sqrt{\lambda_1^B}}$ where $k\in \mathbb{Z},t\in\mathbb{R}$. Also, denote $\d_1,\d_2$ as distance functions on $\mathbb{R},N$ respectively and denote $\d$ as the induced distance function on $\mathbb{R}\times_\mathbb{Z}N$. 

Consider the collection of submanifolds $\mathcal{F}'=\{\Tilde{p}^{-1}([x]): [x]\in \mathbb{R}/\mathbb{Z}\}$, where $\Tilde{p}^{-1}([x])=[(x,N)]:=\{[(x,y)]:y\in N\}$. This defines a Riemannian foliation. By identifying $\mathcal{F}$ with $\Tilde{\psi}(\mathcal{F})$, we want to show that $\mathcal{F}$ is finer than $\mathcal{F}'$. To do that, we will prove that $\Tilde{p}$ is constant on each $L\in\mathcal{F}$. Since $\vert \nabla u\vert^2=\langle \nabla u,\nabla u\rangle$ is basic (see Proposition 7 in \cite{ricardoradeschi}), $\vert \nabla u\vert^2$ is constant on any leaf $L\in \mathcal{F}$. Consider a leaf $L\in \mathcal{F}$ and $x\in L$. If $\vert\nabla u(x)\vert>0$, then $\vert \nabla u(y)\vert=\vert \nabla u(x)\vert>0$ for all $y\in L$ and the connectedness of $L$ and $M_x$ imply that $L\subset M_x$. From the construction of $M_x$ in Proposition \ref{uiiiiia}, we defined the isometry $\psi:M_x\to \left(-\frac{\pi}{2\sqrt{\lambda_1^B}},\frac{\pi}{2\sqrt{\lambda_1^B}}\right)\times N$ given by the formula $y\mapsto \left(\frac{1}{\sqrt{\lambda_1^B}}\sin^{-1}u(y),\mathcal{P}(y)\right)$. Hence, the constancy of $u$ on $L$ implies that $\psi(x)$ has the first coordinate constant on $L$ and then $\Tilde{p}$ is also constant on $L$. In the case of $\vert\nabla u(x)\vert=0$, there exists $y\in M$ such that $x\in u^{-1}(1)\cap \overline{M_y}$ or $x\in u^{-1}(-1)\cap\overline{M_y}$, and without loss of generality, assume that $x\in u^{-1}(1)\cap\overline{M_y}=V$. Then, from the proof of Theorem \ref{uoa}, we can get a local coordinate $\phi(z)=(z_1(z),z_2(z))$ on a tubular neighborhood $U=U_{r_0}(V)$ of $V$ which a chosen $r_0>0$, where 
$z_1(z)=\d(z,V)$ if $z\in M_y\cap U$ and $z_1(z)=-\d(z,V)$ if $z\in U\setminus M_y$. Notice that for every $z\in L$, $u(z)=u(x)=1$, then by the connectedness, we imply that $z\in V$ and hence $z_1(z)=\d(z,V)=0$ for every $z\in L$ and hence $\Tilde{p}$ is constant on $L$. Combine two cases, we get that $\tilde{p}$ is constant on each leaf of $\mathcal{F}$ and therefore, by identifying $\Tilde{\psi}(\mathcal{F})$ with $\mathcal{F}$, we conclude that $\mathcal{F}$ is finer than $\mathcal{F}'$.


Let $\d_{M/\mathcal{F'}}$ be the diameter of the leaf space $M/\mathcal{F'}$. Since $M/\mathcal{F'}$ is isometric to $S^1\left(\frac{\mathcal{N}(u)}{2\sqrt{\lambda_1^B}}\right)$ and because $\mathcal{F'}$ is coarser than $\mathcal{F}$ so 
\begin{align*}
\frac{\mathcal{N}(u)}{2}\d_{M/\mathcal{F}}=\frac{\pi\mathcal{N}(u)}{2\sqrt{\lambda_1^B}}=\d_{M/\mathcal{F'}}\leq \d_{M/\mathcal{F}}.
\end{align*}and this implies $\mathcal{N}(u)\leq 2$. Therefore, $\mathcal{N}(u)=2$.

Finally, we want to show that $\mathcal{F}=\mathcal{F}'$ by showing that each leaf of $\mathcal{F}'$ contains exactly one leaf of $\mathcal{F}$, which indicates the conclusion. Indeed, let $L_1$ and $L_2$ be leaves of $\mathcal{F}$ such that $L_1,L_2\subset L'_t= \Tilde{p}^{-1}([t])$. Let $s=[(t+\d_{M/\mathcal{F}},s_2)]\in \mathbb{R}\times_\mathbb{Z} N$ be a point where $s_2\in N$.

    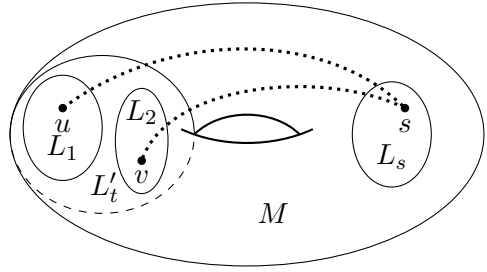
\begin{figure}[h!]
  \centering
        \begin{tikzpicture}[scale=.35]
    \draw (9,0) arc[start angle=0, end angle=360, x radius= 9, y radius=5];
    \draw (-2,0) arc[start angle=0, end angle=180, x radius =3.5, y radius=3];
    \draw[dashed] (-9,0) arc[start angle=180, end angle=360, x radius=3.5, y radius=3];
    \draw[thick] (-2,0) .. controls (-1,1) and (1,1) .. (2,0);
    \draw[thick] (-2.5,0.2) .. controls (-1,-0.5) and (1, -0.5)..(2.5,0.2); 
    \filldraw[black] (-7,1) circle [radius=4pt]
    (-4,-1) circle[radius=4pt];
    
    \draw[dotted, very thick] (-7,1) node[below]{$u$} .. controls (-3,4) and (3,4)  .. (6,1) node[below]{$s$};
    \draw (-5.5,0.25) arc[start angle=0, end angle=360, x radius=1.5, y radius=2];
    \draw(-7,-0.5) node{$L_1$};
    \draw (-3,-0.25) arc[start angle=0, end angle=360, x radius=1, y radius=2];
    \draw(-4,0) node[above]{$L_2$};
    \filldraw[black] (6,1) circle [radius=4pt]
    (-4,-1) node[below]{$v$};
    \draw[dotted, very thick](6,1).. controls (3,2.5) and (-3,2) .. (-4,-1);
    \draw (7,0) arc[start angle=0, end angle=360, x radius=1.5,y radius=2];
    \draw(5.5,0) node[below]{$L_s$};
    \draw (-5.4,-2) node{$L'_t$};

    \draw(1,-3) node{$M$};

\end{tikzpicture}
\caption{\footnotesize To show $L_1=L_2$, the idea is to prove that if $u,v$ are points that realize the distances from $s$ to each leaf $L_1,L_2$ respectively, then $u$ and $v$ must be the same point. In the picture, we see that if $L_1\neq L_2$, then $u\neq v$.}
    \label{hahaha}

    \end{figure}

Now, by the global equidistance, we see that  
\begin{align*}
    \d_{M/\mathcal{F}}\geq \d(L_s,L_1)=\d(s, L_1)=\d(s,u)&=\sqrt{\d_1^2(t,t+\d_{M/\mathcal{F}})+\d_2^2(s_2,u_2)}\\&=\sqrt{\d^2_{M/\mathcal{F}}+\d_2^2(s_2,u_2) },
\end{align*}where $u=[(t,u_2)]$ is some point of the leaf $L_1\subset \Tilde{p}^{-1}([t])$. This implies that $\d_2(s_2,u_2)=0$ and so $u_2=s_2$.

Similarly, there exists a point $v=[(t,v_2)]\in L_2$ such that $v_2=s_2$.

Since $u=[(t,u_2)]=[(t,s_2)]=v$ and we know the fact that $u\in L_1,v\in L_2$, so $L_1=L_2$ (See Figure \ref{hahaha}). It also means that $L'_t=\Tilde{p}^{-1}([t])$, a $\mathcal{F}$-saturated set, consists of only one leaf.

Therefore, $\mathcal{F}=\mathcal{F}'=\{\Tilde{p}^{-1}([t])\}$ and we finish the proof.

\end{proof}

\begin{remark}
    In the case when $\mathcal{F}$ is given as the (connected) fibers of a Riemannian submersion $\pi:M\to M/\mathcal{F}$ with totally geodesic (or, more generally, minimal) fibers, if we apply Hang-Wang rigidity for the first non-zero eigenvalue of $M/\mathcal{F}$ which is also the first non-zero basic eigenvalue of $M$, then 
     we can only conclude that $M/\mathcal{F}$ is isometric to $S^1\left(\frac{1}{\sqrt{\lambda_1^B}}\right)$ without knowing the structure of $\mathcal{F}$. By proving Theorem \ref{main2}, not just $M/\mathcal{F}$, but we also understand the shape of $M$ and the structure of $\mathcal{F}$.
\end{remark}

\section{The case of Positive Ricci curvature}\label{1111}

In this section, we study a lower bound for the case of singular Riemannian foliations on a compact Riemannian manifold with the assumption of\textbf{ positive Ricci curvature. }.

We will follow the ideas that are showed in the work of Bakry-Qian (see Theorem 14, \cite{bakryqian}), and the work of Shi-Zhang \cite{shizhang} to prove Theorem \ref{main}.

\begin{lemma}[\cite{bakryqian}]\label{jjj}

For given $K>0,n>1,a,\delta<\frac{\pi}{\sqrt{K}}$, let $\lambda(K,n,\delta,a)$ be the first non-zero eigenvalue of the Neumann problem on the interval of length $\delta$, $[a,a+\delta]$, of the equation given by
\begin{align*}
   L_{K,n}v=-\lambda v\text{ on }(a,a+\delta),
\end{align*} and\begin{align*}
v'(a)=v'(a+\delta)=0,
\end{align*} where $(L_{K,n}v)(x):= v''(x)-(n-1)\sqrt{K}\tan(\sqrt{K}x)v'(x)$

Then we have two properties of $\lambda(K,n,\delta,a)$:
\begin{enumerate}[(1)]
    \item $\lambda(K,n,\delta,a)\geq \lambda(K,n,\delta,-\delta/2)$, in other words, the central interval has the lowest Neumann eigenvalue.
    \item The function $\d\mapsto \lambda(K,n,\d,-\d/2)$ is a non-increasing function.
\end{enumerate}

\end{lemma}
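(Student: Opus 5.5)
The plan is a variational (Rayleigh quotient) argument for the weighted Sturm--Liouville operator, followed by a reflection/comparison argument. Set $w(x)=\cos^{n-1}(\sqrt{K}x)$. This weight is positive on $(-\frac{\pi}{2\sqrt K},\frac{\pi}{2\sqrt K})$, and it lets us write the operator in divergence form,
\begin{align*}
L_{K,n}v=\frac{1}{w}(wv')'.
\end{align*}
Hence the Neumann problem on $[a,a+\delta]$ is self-adjoint in $L^2(w\,dx)$, and
\begin{align*}
\lambda(K,n,\delta,a)=\inf\left\{\frac{\int_a^{a+\delta}v'^2w\,dx}{\int_a^{a+\delta}v^2w\,dx}:\ \int_a^{a+\delta}vw\,dx=0,\ v\neq 0\right\}.
\end{align*}
We will assume the interval lies inside the domain of $w$. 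This holds whenever $|a|,|a+\delta|<\frac{\pi}{2\sqrt K}$, and that is the regime in which the lemma is applied.

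Property (2) comes from domain monotonicity. We reparametrize the centered interval by scaling, setting $x=\frac{\delta}{\delta'}y$, to compare the intervals $[-\delta/2,\delta/2]$ and $[-\delta'/2,\delta'/2]$ for $\delta<\delta'$. A cleaner route is to study the first eigenfunction directly. On the centered interval the weight is even, so the first nonconstant Neumann eigenfunction $v$ is odd and monotone, with $v'>0$ inside and $v'=0$ at $\pm\delta/2$. Differentiating the eigenvalue equation, $\phi=v'$ satisfies
\begin{align*}
\phi''-(n-1)\sqrt K\tan(\sqrt K x)\phi'-(n-1)K\sec^2(\sqrt K x)\phi=-\lambda\phi,
\end{align*}
with Dirichlet data and $\phi>0$. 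So $\lambda(K,n,\delta,-\delta/2)$ is the first Dirichlet eigenvalue of this modified operator on $(-\delta/2,\delta/2)$. Dirichlet eigenvalues are strictly monotone under inclusion of domains: extend by zero and use the Rayleigh quotient. This gives that $\delta\mapsto\lambda(K,n,\delta,-\delta/2)$ is non-increasing.

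Property (1) uses the same Dirichlet reformulation, which holds for any interval $[a,a+\delta]$. Monotonicity of $v$ requires an argument: for Sturm--Liouville problems the first nontrivial Neumann eigenfunction has exactly one zero, and $v'$ vanishes only at the endpoints. With this, $\lambda(K,n,\delta,a)$ is the first Dirichlet eigenvalue $\mu(a)$ of the operator
\begin{align*}
\mathcal{A}\phi=\phi''-(n-1)\sqrt K\tan(\sqrt K x)\phi'-(n-1)K\sec^2(\sqrt K x)\phi
\end{align*}
on $(a,a+\delta)$. Let $\phi_a>0$ be the associated eigenfunction. Hadamard's variational formula for translating the interval gives
\begin{align*}
\mu'(a)=\frac{\big(\phi_a'(a)^2-\phi_a'(a+\delta)^2\big)\,w}{\int\phi_a^2w}\quad\text{(up to weight factors at the endpoints)}.
\end{align*}
The aim is to show that $\mu'(a)$ has the sign of $a+\delta/2$, so that the minimum occurs at $a=-\delta/2$. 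The reasoning uses the symmetry $x\mapsto -x$ of the operator. The potential $(n-1)K\sec^2$ and the drift $\tan$ both grow as one moves away from $0$. Consequently the eigenfunction is skewed toward the center, and its slope is larger at the endpoint nearer the center.

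The main obstacle is making this sign comparison rigorous. My first attempt would be a reflection argument: compare $\phi_a$ with its reflection about the midpoint $a+\delta/2$ on half of the interval, and apply the maximum principle to their difference. The comparison exploits the fact that $\tan$ and $\sec^2$ are increasing in $|x|$. If that fails, I would fall back on a Bakry--Qian style gradient comparison, working with the Riccati variable $\phi'/\phi$.
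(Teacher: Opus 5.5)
\textbf{Gap in (1).} Your Dirichlet reformulation is sound, and it is a different, self-contained route: the paper gives no argument for this lemma and only cites Bakry--Qian (Theorem 13 for (1), Section 3 and Theorem 14 for (2)). From $(wv')'=-\lambda wv$ and the single sign change of $v$ you get $wv'>0$ inside. So $\phi=v'$ is a positive Dirichlet eigenfunction of $\mathcal A$, $\lambda(K,n,\delta,a)$ is the principal Dirichlet eigenvalue $\mu(a)$ of $-\mathcal A$ on $(a,a+\delta)$, and (2) follows from domain monotonicity.

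The gap is in (1). You never establish the sign of $\mu'(a)$, and the heuristic you give for it is false. Normalize $\int w\phi_a^2=1$ and write $b=a+\delta$. Hadamard's formula is then $\mu'(a)=w(a)\phi_a'(a)^2-w(b)\phi_a'(b)^2$, and the unequal weights at the two endpoints decide the sign; they are not a side factor. Since $\phi_a'(a)=v''(a)=-\lambda v(a)$, and likewise at $b$, your claim that the slope of $\phi_a$ is larger at the endpoint nearer the center means $|v(a)|\ge|v(b)|$. Take small $K$ on $[0,\pi]$, where the drift is $(n-1)Kx+O(K^2)$. First-order perturbation of $v_0=-\cos x$ gives instead $|v(\pi)|/|v(0)|=1+\tfrac{(n-1)K\pi^2}{4}+O(K^2)$. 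So the far endpoint has the larger slope, and the correct sign can only come from the weights. Consequently a reflection argument run on $\phi_a$ itself cannot close, since it would prove exactly this false unweighted inequality. Concretely, with $m=a+\delta/2$ and $T=(n-1)\sqrt K\tan(\sqrt K x)$, the equation for $\phi_a(2m-x)-\phi_a(x)$ contains the drift term $\bigl(T(x)-T(x-2m)\bigr)\phi_a'(2m-x)$. This term is positive, i.e.\ of the wrong sign, at every $x$ with $\phi_a'(2m-x)>0$.

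\textbf{The missing idea.} Remove the drift first, via the Liouville transform $\psi=\sqrt w\,\phi$. A direct computation gives $-\mathcal A\phi=w^{-1/2}\bigl(-\psi''+V\psi\bigr)$ with
\begin{align*}
V=\frac{T^2}{4}+\frac{T'}{2}=\frac{(n-1)^2K}{4}\tan^2(\sqrt K x)+\frac{(n-1)K}{2}\sec^2(\sqrt K x),
\end{align*}
which is even and increasing in $|x|$. Moreover $\int w\phi^2=\int\psi^2$ and $w\phi'^2=\psi'^2$ wherever $\phi=0$, so $\mu'(a)=\psi'(a)^2-\psi'(b)^2$. You can then finish (1) in either of two ways.
\begin{enumerate}[(i)]
\item Take the symmetric decreasing rearrangement $|\psi|^*$. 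It lies in $H^1_0(-\delta/2,\delta/2)$, does not increase $\int\psi'^2$ (P\'olya--Szeg\H{o}), and does not increase $\int V\psi^2$ (Hardy--Littlewood, since $V$ is symmetric increasing). This gives $\mu(a)\ge\mu(-\delta/2)$ at once.
\item Run your reflection on $\psi$ instead of $\phi_a$. For $m>0$ and $x\in(m,b)$ one has $|2m-x|\le x$, so $D(x)=\psi(2m-x)-\psi(x)$ satisfies $-D''+(V-\mu)D=\bigl(V(x)-V(2m-x)\bigr)\psi(2m-x)\ge 0$, with $D(m)=D(b)=0$. The maximum principle applies because the first Dirichlet eigenvalue of $(m,b)$ exceeds $\mu$, so $D\ge 0$. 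Hence $D'(b)\le 0$, i.e.\ $\psi'(a)\ge|\psi'(b)|$, and $\mu'(a)\ge 0$ for $a\ge-\delta/2$. The symmetry $\mu(a)=\mu(-a-\delta)$ then gives (1).
\end{enumerate}
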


\begin{proof}
    To prove (1), see Theorem 13 of \cite{bakryqian}.

    For proving (2), see the discussion of eigenvalues on symmetric intervals of section 3 of \cite{bakryqian}, or the last argument in Theorem 14 of \cite{bakryqian}.
\end{proof}


    



In the following lemma, we proceed exactly as in the proof of Theorem 14, \cite{bakryqian}, with the new thing is we choose a more specific geodesic to take an integral, similar to the way that is introduced in the proof of Theorem \ref{main1}.

\begin{lemma}\label{999}
    Let $M^n$ be a compact Riemannian manifold and $\mathcal{F}$ be a singular Riemannian foliation of $M$ with closed leaves and basic mean curvature. Assume the Ricci curvature of $M$ is bounded below by $(n-1)K>0$. Suppose $\lambda_1^B$ is the first basic eigenvalue of $(M/\mathcal{F})$ and $\d_{M/\mathcal{F}}$ is the diameter of the leaf space $M/\mathcal{F}$. Then $$\lambda_1^B\geq \lambda(K,n,\d_{M/\mathcal{F}},-\d_{M/\mathcal{F}}/2).$$ 
\end{lemma}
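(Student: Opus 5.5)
Following the proof of Theorem 14 in \cite{bakryqian} (the Kröger/Bakry--Qian comparison) and changing only the geodesic used in the final integration. Let $u\in C^\infty_B(M)$ be a basic eigenfunction with $\Delta u=-\lambda_1^B u$. Since $\ric\geq (n-1)K>0$, the Bakry--Qian gradient comparison applies to any eigenfunction, basic or not. After the normalization of Section \ref{888} ($\max u=1$, $\min u=-k$, $0<k\leq 1$), choose a Neumann eigenfunction $v$ of $L_{K,n}$ on an interval $[a,b]\subset(-\frac{\pi}{2\sqrt K},\frac{\pi}{2\sqrt K})$. Take the eigenvalue equal to $\lambda_1^B$, with $v$ monotone increasing on $[a,b]$ and $v(a)=\min u$, $v(b)=\max u$. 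Bakry--Qian show that such an interval exists, possibly degenerating to a half-line situation that they treat separately. Their maximum principle argument, which uses Bochner's formula with $\ric\geq (n-1)K$ and the Riccati-type structure of $L_{K,n}$, gives
\begin{align*}
|\nabla u|\leq v'(v^{-1}(u)) \quad\text{on } M.
\end{align*}
I will take this estimate verbatim from \cite{bakryqian}. It does not involve $\mathcal{F}$.

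Set $h=v^{-1}\circ u$. Then $|\nabla h|\leq 1$ wherever defined, and $h$ is basic, being a function of $u$. Choose $x$ with $u(x)=\min u$. Let $A=u^{-1}(\max u)$, which is a compact union of leaves. Take $y\in A$ with $\d(x,y)=\d(x,A)$, joined by a minimizing geodesic $\gamma$. As in the proof of Theorem \ref{main1}, $\gamma$ realizes $\d(x,L_y)$. By global equidistance of closed leaves, $L(\gamma)=\d(L_x,L_y)\leq \d_{M/\mathcal{F}}$. Integrating $|\nabla h|\leq 1$ along $\gamma$ then gives
\begin{align*}
b-a = h(y)-h(x)\leq L(\gamma)\leq \d_{M/\mathcal{F}}.
\end{align*}

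Since $v$ is a Neumann eigenfunction on $[a,b]$ with eigenvalue $\lambda_1^B$ and is monotone, it is the first Neumann eigenfunction. Hence $\lambda_1^B=\lambda(K,n,b-a,a)$. Lemma \ref{jjj}(1) gives $\lambda_1^B\geq \lambda(K,n,b-a,-(b-a)/2)$. Lemma \ref{jjj}(2), applied with $b-a\leq \d_{M/\mathcal{F}}$, then gives
\begin{align*}
\lambda_1^B\geq \lambda(K,n,\d_{M/\mathcal{F}},-\d_{M/\mathcal{F}}/2).
\end{align*}
We also need $\d_{M/\mathcal{F}}<\pi/\sqrt K$ so that the right-hand side is defined. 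This follows from Bonnet--Myers, since $\d_{M/\mathcal{F}}\leq \d_M\leq \pi/\sqrt K$. The boundary case $\d_M=\pi/\sqrt K$ can be handled by a limiting argument or by Cheng's rigidity.

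The main obstacle is the first step, namely producing the matching one-dimensional model with $v(a)=\min u$, $v(b)=\max u$. Bakry--Qian handle this by a continuity argument in the interval endpoints, with special care when $-\min u$ and $\max u$ differ, since then the model interval may be non-symmetric or unbounded in the limit. I would import that construction directly rather than reprove it. The foliation-specific step, the choice of $\gamma$ through the level set $A$, is routine given Theorem \ref{main1}.
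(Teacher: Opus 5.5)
Your proposal is correct and follows the paper's proof essentially step for step. You normalize the basic eigenfunction and import the Bakry--Qian model interval $[a,b]$ together with the gradient bound $|\nabla (v^{-1}\circ u)|\leq 1$. You then integrate along a geodesic from a minimum point that minimizes the distance to the maximum level set, so its length is a leaf distance $\leq \d_{M/\mathcal{F}}$, and finish with both parts of Lemma \ref{jjj}. Your extra remarks only fill in details the paper leaves implicit: that a monotone Neumann eigenfunction is the first one, and that $\d_{M/\mathcal{F}}\leq \pi/\sqrt{K}$ by Bonnet--Myers, with the endpoint case reducible to Lichnerowicz as in Case 1 of the proof of Theorem \ref{main}.
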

\begin{proof}
   Use a similar argument as in the Theorem 14 of \cite{bakryqian}: First, let $u$ be the (basic) eigenfunction with the eigenvalue $\lambda_1^B$. Similar as the argument in the introduction of the subsection \ref{888}, we can normalize $u$ and assume $u$ is the (basic) eigenfunction with the eigenvalue $\lambda_1^B$ such that $\min u=-1$ and $0<\rho=\max u\leq 1$. 

There exists an interval $[a,b]$ which has $\lambda=\lambda_1^B$ as the eigenvalue with the eigenfunction $v$ of $L_{K,n}v=-\lambda v$ with Neumann boundary conditions $v'(a)=v'(b)=0$, such that the corresponding eigenvector $v$ of the model satisfies $v(a)=-1$ and $v(b)=\rho$ (See Proposition 1, Section 3 and Theorem 9, Section 6, of \cite{bakryqian} for more detail).

  Then, apply Theorem 8 of \cite{bakryqian} for the function $g=v^{-1}\circ u$ to get that  $\vert \nabla g\vert\leq 1$. 
   
   Let $x\in M$ such that $u(x)=-1$. Choose $y$ from the level set $A=u^{-1}(\rho)$ and let $\gamma
        $ be the geodesic from $x$ to $y$ that minimizes the distance from $x$ to the $ u^{-1}(\rho)$. That gives us
        \begin{align}
            L(\gamma)=\d(x,y)=\d(x,A).
        \end{align}

        This geodesic $\gamma$ is also the shortest geodesic from $x$ to $L_y\subset A$, which means $d(x,A)=d(x,L_y)$.

        The global equidistance of $\mathcal{F}$ implies  that $\d(x,L_y)=d(L_x,L_y)$ and hence
        \begin{align}
            \d(x,y)=L(\gamma)=\d(L_x,L_y)\leq\sup_{p,q\in M}\d(L_p,L_q)=d_{M/\mathcal{F}}.
        \end{align}

        Since $\vert\nabla g\vert\leq 1$, we get that\begin{align}\label{kkk}
            \d_{M/\mathcal{F}}\geq\d(x,y)=\int_\gamma \d t\geq\left\vert\int_\gamma\nabla g\cdot \d r\right\vert=\vert g(y)-g(x)\vert=b-a.
        \end{align}

        From Lemma \ref{jjj}, we see that \begin{align*}
            \lambda_1^B=\lambda(K,n,b-a,a)\geq \lambda(K,n,b-a,-(b-a)/2).
        \end{align*}

       Because the function $\d\mapsto\lambda(K,n,\d,-\d/2)$ is non-increasing, hence, use \ref{kkk},  we get that
        \begin{align*}
            \lambda_1^B\geq\lambda(K,n,\d_{M/\mathcal{F}},-\d_{M/\mathcal{F}}/2).
        \end{align*}
\end{proof}

Finally, we are ready to prove Theorem \ref{main}
\begin{proof}[Proof of Theorem \ref{main}]
First, assume that $K>0$.

    Use Lemma \ref{999}, then we get that $\lambda_1^B\geq \lambda(K,n,\d_{M/\mathcal{F}},-\d_{M/\mathcal{F}}/2)$. Let $v$ be the corresponding eigenfunction of $\lambda(K,n,\d_{M/\mathcal{F}},-\d_{M/\mathcal{F}}/2)$ of the equation
    \begin{align*}
        L_{K,n}v=-\lambda v\text{ on }\left(-\frac{\d_{M/\mathcal{F}}}{2},\frac{\d_{M/\mathcal{F}}}{2}\right),
    \end{align*}and $v'\left(-\frac{\d_{M/\mathcal{F}}}{2}\right)=v'\left(\frac{\d_{M/\mathcal{F}}}{2}\right)=0$.

    Notice that $v$ is odd, so $v(0)=0$. Without loss of generality, we assume that $v(x)> 0$ if $x>0$.

    Recall the most important calculation from Shi-Zhang (see Lemma 2.2, \cite{shizhang})
    \begin{lemma}[Lemma 2.2 of \cite{shizhang}]\label{abcde}
        Assume $\lambda$ and $f(x),0\leq x\leq l$ satisfy that
        \begin{align*}
            f''(x)+F(x)f'(x)=-\lambda f(x),\text{ }x\in [0,l],
        \end{align*}where $F(x)$ be a differentiable function on $[0,l]$ with $F(0)=0$ and $f(x)$ satisfies the conditons $f(0)=0, f(x)>0$ when $x>0$, and $f'(l)=0$.

        Denote $F_0:=\max \{F'(x):0\leq x\leq l\}$, then we have the following estimate
        \begin{align}\label{zzz}
            \lambda\geq-s^2\left(\frac{\pi}{l}\right)^2+s\left(\left(\frac{\pi}{l}\right)^2-F_0\right).
        \end{align}for any constant $s$ such that $0<s<1$.
    \end{lemma}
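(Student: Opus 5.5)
The plan is to prove Lemma \ref{abcde} by a one-dimensional Riccati comparison. The weight $F$ will be controlled only through $F(x)\leq F_0x$, which follows from $F(0)=0$ and $F'\leq F_0$ on $[0,l]$. The free parameter $s$ will come from an interpolation between the target bound and a Young-type inequality.

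\textbf{Step 1 (reduce to a Riccati equation).} Since $f(0)=0$ and $f>0$ on $(0,l]$, the function $f'(0)$ cannot vanish. Otherwise $f\equiv 0$ by uniqueness for the linear ODE, so $f'(0)>0$. Set $y=f'/f$ on $(0,l]$. Then $y(x)\sim 1/x$ as $x\to 0^+$, $y(l)=0$, and the equation $f''+Ff'=-\lambda f$ becomes
\begin{align*}
y'=-\lambda-F\,y-y^2 .
\end{align*}
I will first check that $f'\geq 0$ on $[0,l]$, i.e.\ $y\geq 0$. I expect this to follow from the first-eigenfunction property: $f$ is the first mixed Dirichlet–Neumann eigenfunction of the Sturm–Liouville operator $e^{-\int F}(e^{\int F}f')'$. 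Then the term $F\,y$ can be bounded by $F_0\,x\,y$ when $F_0\geq 0$. If $F_0<0$ the inequality is weaker than what is needed, and the same argument applies verbatim.

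\textbf{Step 2 (a model barrier).} Put $\mu=\pi/(2l)$ and consider the model Riccati solution $z(x)=\mu\cot(\mu x)$. It satisfies $z'=-\mu^2-z^2$, $z(x)\sim 1/x$ at $0$, and $z(l)=0$. Suppose the bound \eqref{zzz} fails, so that $\lambda<-s^2(\pi/l)^2+s((\pi/l)^2-F_0)$. I will compare $y$ with a rescaled barrier $w=\alpha\,z(\beta x)$, choosing $\alpha,\beta$ in terms of $s$ so that $w$ is a strict supersolution:
\begin{align*}
w'>-\lambda-F_0\,x\,w-w^2 .
\end{align*}
The term $F_0\,x\,w$ will be absorbed with Young's inequality, $F_0xw\leq \tfrac{1-s}{s}\cdot(\text{quadratic in }w)+\tfrac{s}{1-s}\cdot(\ldots)$. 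This is where the combination $-s^2(\pi/l)^2+s(\pi/l)^2=4s(1-s)\mu^2$ and the coefficient $sF_0$ should appear. I will also use the elementary bound $\mu x\cot(\mu x)\leq 1$.

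\textbf{Step 3 (conclude by a first-crossing argument).} Both $y$ and $w$ behave like $1/x$ near $0$, so a careful expansion (or starting the comparison at small $x$) orders them there. A first-touching argument then gives $y\geq w$ on $(0,l]$. On the other hand, $w$ stays strictly positive at $x=l$, because $\beta<1$ pushes the zero of the cotangent beyond $l$. This contradicts $y(l)=0$, and therefore \eqref{zzz} holds.

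The main obstacle is Step 2. One has to pick $\alpha,\beta$ (equivalently, to write $f$ as a power $\phi^{1/s}$ of an auxiliary function) so that the cross term $F\,y$ is absorbed exactly with the constants $4s(1-s)$ and $s$. If the Riccati barrier turns out to be too rigid, I would instead use an integral version. I would multiply the differentiated equation, $g''+Fg'+F'g=-\lambda g$ with $g=f'$, by $g^{2s-1}$-type weights times $e^{\int F}$, and integrate over $[0,l]$ using $F'\leq F_0$ and the boundary conditions $f(0)=0$, $f'(l)=0$. Then I would finish with the one-dimensional Wirtinger inequality on $[0,l]$, whose sharp constant is $(\pi/(2l))^2$.
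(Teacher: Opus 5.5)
The paper does not prove this lemma; it only cites Lemma 2.2 of \cite{shizhang}. Judged on its own, your main route has a genuine gap, in two places.

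\textbf{The comparison runs the wrong way.} After using $F\le F_0x$ and $y\ge0$ you only know $y'\ge -\lambda-F_0xy-y^2$. A first-touching argument therefore gives $y\ge w$ only if $w$ is a strict \emph{sub}solution, $w'<-\lambda-F_0xw-w^2$. With a supersolution there is no contradiction at a touching point.

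\textbf{The cotangent family cannot produce the $-sF_0$ gain, even with the direction fixed.} Take $w=\alpha\mu\cot(\mu\beta x)$, so that $w'=-\alpha\beta\mu^2-(\beta/\alpha)w^2$. Staying below $y\sim 1/x$ at $0$ forces $\rho:=\alpha/\beta\le1$, and $w(l)>0$ forces $u:=\beta\pi/2<\pi/2$. Letting $x\to l$ in the subsolution inequality gives the necessary condition $\lambda\le\rho\frac{u^2}{l^2}\bigl(1+(1-\rho)\cot^2u\bigr)-F_0\rho\,u\cot u$.

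Now take $F_0=-\kappa<0$, which is exactly the case used in the proof of Theorem \ref{main}, and suppose $0<\kappa<1/(2l^2)$. This right-hand side is below $\pi^2/(4l^2)$ for every admissible $\rho,u$:
\begin{itemize}
\item for $u\ge\pi/4$, one has $\rho(1+(1-\rho)\cot^2u)\le1$, and $u\mapsto u^2/l^2+\kappa u\cot u$ is increasing with limit $\pi^2/(4l^2)$;
\item for $u\le\pi/4$, the quantity is at most $\pi^2/(16l^2)+\kappa$.
\end{itemize}
So this barrier can never certify more than $\lambda\ge\pi^2/(4l^2)$. At $s=1/2$, however, the lemma asserts $\lambda\ge\pi^2/(4l^2)+\kappa/2$. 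The reason is structural: near $l$ the barrier is small, so the $F_0xw$ term that should carry the gain disappears. Your bound $\mu x\cot(\mu x)\le1$ controls $xw$ only from above, which helps when $F_0>0$ but not when $F_0<0$.

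\textbf{Your fallback does not close the gap as described.} The full weight $e^{\int F}$ leaves a weighted Rayleigh quotient, to which the unweighted constant $(\pi/(2l))^2$ does not apply. The factor $g^{2s-1}$ also creates a divergent boundary term at $l$ when $s<1/2$, since $g(l)=0$ but $g'(l)=-\lambda f(l)\neq0$.

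\textbf{The missing idea} is to use the \emph{half} weight, which turns the drift into a potential, and then let $s$ interpolate between $F'$ and $F^2$.
\begin{itemize}
\item Put $\psi=e^{\frac12\int_0^xF}f'$. Then $\psi''+(\lambda+\frac12F'-\frac14F^2)\psi=0$, with $\psi'(0)=f''(0)=0$ and $\psi(l)=0$. Hence $\int_0^l\psi'^2=\int_0^l(\lambda+\frac12F'-\frac14F^2)\psi^2$.
\item Split $\frac12F'=sF'+(\frac12-s)F'$ and integrate the second part by parts: $(\frac12-s)\int_0^lF'\psi^2=-(1-2s)\int_0^lF\psi\psi'$. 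There are no boundary terms, since $F(0)=0$ and $\psi(l)=0$.
\item Young's inequality gives $|(1-2s)F\psi\psi'|\le\frac14F^2\psi^2+(1-2s)^2\psi'^2$, and the $\frac14F^2\psi^2$ terms cancel.
\item Using $F'\le F_0$ and $s>0$, you get $4s(1-s)\int_0^l\psi'^2\le(\lambda+sF_0)\int_0^l\psi^2$.
\item The mixed Wirtinger inequality $\int_0^l\psi'^2\ge\frac{\pi^2}{4l^2}\int_0^l\psi^2$ holds because $\psi(l)=0$ and $\psi\not\equiv0$. It yields $\lambda+sF_0\ge s(1-s)\pi^2/l^2$, which is the claimed estimate.
\end{itemize}
The $s$-dependence enters exactly through $1-(1-2s)^2=4s(1-s)$. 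This argument does not even need the sign information from your Step 1.
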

    \begin{proof}[Proof of Lemma 4.4] See Lemma 2.2. \cite{shizhang}.
        
    \end{proof}
To finish the proof of Theorem \ref{main}, we consider two cases:

 \textbf{Case 1:} If $\d_{M/\mathcal{F}}=\frac{\pi}{\sqrt{K}}$, then the RHS of \ref{uuuuuu} can be written as
\begin{align*}
    4s(1-s)K+s(n-1)K=[4s(1-s)+s(n-1)]K\leq [1+(n-1)K]=nK.
\end{align*}

The inequality \ref{uuuuuu} is true, indeed, by following Lichnerowicz's estimate, we see that $\lambda_1^B\geq\lambda_1\geq nK$.

\textbf{Case 2:} If $\d_{M/\mathcal{F}}<\frac{\pi}{\sqrt{K}}$, we apply Lemma \ref{abcde} for $F(x)=-(n-1)\sqrt{K}\tan(\sqrt{K}x), f(x)=v(x),l=\d_{M/\mathcal{F}}/2,\lambda=\lambda(K,n,\d_{M/\mathcal{F}},-\d_{M/\mathcal{F}}/2)$ with an observation that
\begin{align*}
    F_0&=\max\{F'(x):0\leq x \leq\d_{M/\mathcal{F}}\}\\&=\max\{-(n-1)K\sec^2(\sqrt{K}x):0\leq x \leq\d_{M/\mathcal{F}}\}\\&=-(n-1)K,
\end{align*} and from \ref{zzz}, we have that
\begin{align*}
    \lambda_1^B&\geq\lambda(K,n,\d_{M/\mathcal{F}},-\d_{M/\mathcal{F}}/2)\\&\geq -s^2\left(\frac{2\pi}{\d_{M/\mathcal{F}}}\right)^2+s\left(\left(\frac{2\pi}{\d_{M/\mathcal{F}}}\right)^2+(n-1)K\right)\\&=4s(1-s)\frac{\pi^2}{\d_{M/\mathcal{F}}^2}+s(n-1)K.
\end{align*}

Therefore, the inequality \ref{uuuuuu} holds when $K>0$.

In the case $K=0$, notice that for every $s$ such that $0<s<1$
\begin{align*}
    4s(1-s)\frac{\pi^2}{\d_{M/\mathcal{F}}^2}+s(n-1)K=4s(1-s)\frac{\pi^2}{\d_{M/\mathcal{F}}^2}\leq\sup_{s\in (0,1)}4s(1-s)\frac{\pi^2}{\d_{M/\mathcal{F}}^2}=\frac{\pi^2}{\d_{M/\mathcal{F}}^2}.
\end{align*}

Recall Theorem \ref{main1}, then we confirm that the inequality \ref{uuuuuu} is true when $K= 0$. 
\end{proof}
\begin{remark}
    Theorem \ref{main} can be applied particularly for $s=\frac{1}{2}$ to obtain Li's type estimate:
    \begin{align}\label{litype}
        \lambda_1^B\geq\frac{\pi^2}{\d_{M/\mathcal{F}}^2}+\frac{1}{2}(n-1)K
    \end{align}

    We can actually learn more about which $s$ would give the best estimate: let $A=\frac{\pi^2}{\d_{M/\mathcal{F}}^2},B=(n-1)K$, then observe that the right hand side of the inequality \ref{uuuuuu} is a quadratic function of $s$:
    \begin{align*}
        b(s)=4s(1-s)A+sB=-4As^2+(4A+B)s
    \end{align*}.

    The only zero of $b(s)$ is $s=s_0=\frac{4A+B}{8A}$.

    Therefore, by studying the function $b(s)$ on $(0,1)$, we can see that when $n\geq 3$:
    \begin{enumerate}
        \item If $s_0<1$, which is equivalent to
        \begin{align*}
            \d_{M/\mathcal{F}}<\frac{2}{\sqrt{(n-1)}}\frac{\pi}{\sqrt{K}},
        \end{align*}then the maximum of $b(s)$ is \begin{align*}
            b(s_0)=\frac{(4A+B)^2}{16A}=A+\frac{B}{2}+\frac{B^2}{16A}=\frac{\pi^2}{\d^2_{M/\mathcal{F}}}+\frac{1}{2}(n-1)K+\frac{B^2}{16A},
        \end{align*} and it is sharper than the Li's type estimate \ref{litype}.
        \item If $s_0\geq 1$, which is equivalent to say
        \begin{align*}
            \frac{2}{\sqrt{n-1}}\frac{\pi}{\sqrt{K}}\leq\d_{M/\mathcal{F}}\leq\frac{\pi}{\sqrt{K}}
        \end{align*} (We recall the upper bound by Bonnet-Myer's Theorem), then the supremum of $b(s)$ is by letting $s\to 1^-$, and we get $\lambda_1^B\geq (n-1)K$, which is weaker than Lichnerowicz's estimate. 
    \end{enumerate}

    So, we can see that Theorem \ref{main} is a sharp estimate when considering a small diameter of the leaf space, or even better, a small diameter of $M$. In this case, the optimal one is 
    \begin{align*}
        \lambda_1^B\geq\frac{\pi^2}{\d_{M/\mathcal{F}}^2}+\frac{1}{2}(n-1)K+\frac{(n-1)^2K^2\d_{M/\mathcal{F}}^2}{16\pi^2}.
    \end{align*}
\end{remark}
\begin{remark}
When $\mathcal{F}$ is given as the (connected) fibers of a Riemannian submersion $\pi:M\to M/\mathcal{F}$ with totally geodesic (or, more generally, minimal) fibers, the basic spectrum of $M$ is turned out to be the spectrum of the base $M/\mathcal{F}$ of the submersion and $\lambda_1^B$ is the first non-zero eigenvalue $\alpha_1$ of the base $M/\mathcal{F}$. We see that if we apply directly Shi-Zhang estimate for the base $M/\mathcal{F}$ with $(n-1)K>0$ as a lower bound for the Ricci curvature, we get a weaker estimate
\begin{align*}
    \lambda_1^B=\alpha_1\geq 4s(1-s)\frac{\pi^2}{\d_{M/\mathcal{F}}^2}+s(m-1)K,
\end{align*}where $m=\dim M/\mathcal{F}$.

That is why if we want to apply Shi-Zhang estimate directly and get a better result than applying Theorem \ref{main}, we expect to have a greater lower bound $K'>K$ for the Ricci curvature of $M/\mathcal{F}$, and we want the following comparison works:
\begin{align*}
    (m-1)K'\geq (n-1)K.
\end{align*}
Unfortunately, this is not always true. We see a counterexample: Let $M=S^1\times S^2$ as a product of a unit circle and two-dimensional unit sphere and consider the projection map $\pi:S^1\times S^2\to S^2$. It is easy to see that $\ric(M)=\ric(M/\mathcal{F})=2$ and $n=3,m=2$. Hence, if we choose $K=2$ then $K'=2$ is the only choice we can get. However, in this case, the inequality $(m-1)K'\geq (n-1)K$ does not hold.

Combine with the original version of Shi-Zhang estimate in the case of a foliation given by points, we can say that Theorem \ref{main} is indeed optimal.

\end{remark}

\printbibliography

\end{document}